\newcommand*{\R}{\ensuremath{\mathbb{R}}}
\newtheorem{thm}{Theorem}[section]
\newtheorem{cor}{Corollary}[section]
\newtheorem{lem}{Lemma}[section]
\newtheorem{rem}{Remark}[section]
\newtheorem{exem}{Example}[section]
\newtheorem{defin}{Definition}[section]
\newtheorem{pr}{Proposition}[section]
\numberwithin{equation}{section}
\begin{document}
\setcounter{page}{1}

\title[Composite Bernstein Cubature]{Composite Bernstein Cubature}

\author[A. M. Acu, H. Gonska]{Ana-Maria Acu$^1$$^{*}$  and Heiner Gonska$^2$}

\address{$^{1}$Lucian Blaga University of Sibiu, Department of Mathematics and
\newline
 Informatics, Str. Dr. I. Ratiu, No.5-7, RO-550012  Sibiu, Romania.}
\email{\textcolor[rgb]{0.00,0.00,0.84}{acuana77@yahoo.com}}

\address{$^{2}$ University of Duisburg-Essen, Faculty of Mathematics, Forsthausweg 2, 47057 Duisburg, Germany.}
\email{\textcolor[rgb]{0.00,0.00,0.84}{heiner.gonska@uni-due.de}}


\subjclass[2010]{Primary 41A36; Secondary 41A15, 65D30.}

\keywords{Composite Bernstein operators, composite quadrature formulas,
modulus of continuity.}

\date{Received: xxxxxx; Revised: yyyyyy; Accepted: zzzzzz.
\newline \indent $^{*}$ Corresponding author}

\begin{abstract}
We consider a sequence of composite bivariate Bernstein operators
and the cubature formula associated with them. The upper-bounds for
the remainder term of cubature formula are described in terms of
moduli of continuity of order two. Also we include some results
showing how non-multiplicative the integration functional is.
\end{abstract} \maketitle

\section{Introduction}

We reconsider (composite) bivariate Bernstein approximation and the corres-\linebreak ponding cubature formulae.
This is motivated by a recent series of articles by Barbosu et al. (see \cite{BarPop2008}-\cite{BarMic2010a}).
However, some of these papers contain rather misleading statements and claims which can hardly be verified.
The present is written with the intention to clean up some of the bugs, to optimize and generalize certain
 estimates, and thus to further describe the situation at hand.

Our present contribution is a continuation of \cite{GonRas2013}. Historically the origin of the method discussed
seems to be in the article \cite{Vernescu} by D.D. Stancu and A. Vernescu.

\section{A general result}

 We first introduce some notation which will be needed to formulate the general result.
 \begin{defin} Let $I$ and $J$ be compact intervals of the real axis
 and let\linebreak  $L:C(I)\to C(I)$ and $M:C(J)\to C(J)$ be discretely defined
 operators, i.e.,
 $$ L(g;x)=\displaystyle\sum_{e\in E}g(x_e)A_{e}(x),\,g\in C(I),x\in I, $$
 where $E$ is a finite index set, the $x_{e}\in I$ are mutually
 distinct and $A_{e}\in C(I)$,\linebreak $ e\in E$.

 Analogously,
 $$ M(h;y)=\displaystyle\sum_{f\in F}h(y_{f})B_{f}(y),\,h\in C(J), y\in J. $$
 If $L$ is of the form above, then its parametric extension to
 $C(I\times J)$ is given by
 $$ _{x}L(F;x,y)=L(F_{y};x)=\displaystyle\sum_{e\in E}F_{y}(x_e)A_{e}(x)=\sum_{e\in E}F(x_e,y)A_e(x). $$
 Here $F_{y}, y\in J$, denote the partial functions of $F$ given by
 $F_{y}(x)=F(x,y),x\in I$.

  Similarly,
 $$ _{y}M(F;x,y)=\displaystyle\sum_{f\in F}F(x,y_{f})B_{f}(y).  $$
 The tensor product of $L$ and $M$ (or $M$ and $L$) is given by
 $$ \left(_{x}L\circ_{y}M\right)(F;x,y)=\displaystyle\sum_{e\in E}\sum_{f\in F}F(x_e,y_f)A_{e}(x)B_{f}(y). $$
 \end{defin}
 The theorem below is given in terms of so-called partial moduli of
 smoothness of order $r$, given for the compact intervals
 $I,J\subset \mathbb{R}$, for $F\in C(I\times J)$, $r\in \mathbb{N}_0$ and $\delta\in
 \mathbb{R}_{+}$ by
 $$ \omega_{r}(F;\delta,0):=\!\sup\left\{\left|\displaystyle\sum_{\nu=0}^r(-1)^{r-\nu}{r\choose \nu}F(x\!+\!\nu h,y)\right|:
  (x,y),(x\!+\!rh,y)\in\! I\!\times \!J, |h|\!\leq \!\delta\right\} $$
  and symmetrically by
$$ \omega_{r}(F;0,\delta):=\!\sup\left\{\left|\displaystyle\sum_{\nu=0}^r(-1)^{r-\nu}{r\choose \nu}F(x,y\!+\!\nu h)\right|:
  (x,y),(x,y\!+\!rh)\!\in\! I\!\times\! J, |h|\!\leq \!\delta\right\}. $$
 The total modulus of smoothness of order $r$ is defined by
$$ \omega_{r}(F;\delta_1,\delta_2):=\displaystyle\sup\left\{\left|\displaystyle\sum_{\nu=0}^r(-1)^{r-\nu}{r\choose \nu}F(x+\nu h_1,y+\nu h_2)\right|:\right.$$
$$\left.
  (x,y),(x+rh_1,y+rh_2)\in I\times J, |h_1|\leq \delta_1,\, |h_2|\leq \delta_2\right\}.  $$

We now formulate and prove a simplified form of Theorem 37 in [6].
\begin{thm}\label{t1}
Let $L$ and $M$ be discretely defined operators as given above such
that
$$  |(g-Lg)(x)|\leq\displaystyle\sum_{\rho=0}^r\Gamma_{\rho,L}(x)\omega_{\rho}(g;\Lambda_{\rho,L}(x)),\,g\in C(I), x\in I,  $$
and
$$  |(h-Mh)(y)|\leq\displaystyle\sum_{\sigma=0}^s\Gamma_{\sigma,M}(y)\omega_{\sigma}(h;\Lambda_{\sigma,M}(y)),\,h\in C(J), y\in J. $$
Here $\omega_{\rho},\rho=0,\dots,r$, denote the moduli of order
$\rho$, and $\Gamma$ and $\Lambda$ are bounded functions.
Analogously for $M$. Then for $(x,y)\in I\times J$ and $F\in
C(I\times J)$ the following hold:
\begin{align*} \left|\left[F-(_{x}L\circ _{y}M)F\right](x,y)  \right| & \leq \displaystyle \sum_{\rho=0}^{r}\Gamma_{\rho,L}(x)\omega_{\rho}(F;\Lambda_{\rho,L}(x),0)\\
&+\|L\|\sum_{\sigma=0}^s\Gamma_{\sigma,M}(y)\omega_{\sigma}(F;0,\Lambda_{\sigma,M}(y)),\end{align*}
 where $\|L\|$ denotes the operator norm of $L$, which is finite due
 to the form of $L$.
\end{thm}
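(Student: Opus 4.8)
The plan is to reduce the bivariate error to two univariate errors by inserting an intermediate operator. Since both $L$ and $M$ are discretely defined, their parametric extensions commute, so that $(_{x}L\circ _{y}M)F={}_{x}L({}_{y}MF)$; I would check this by direct substitution into the two double sums. The starting point is then the telescoping decomposition
$$ F-(_{x}L\circ _{y}M)F=\left(F-{}_{x}LF\right)+{}_{x}L\left(F-{}_{y}MF\right), $$
which follows from the linearity of ${}_{x}L$, and I would estimate the two summands separately via the triangle inequality.

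For the first summand, fix $y\in J$ and regard the partial function $F_{y}\in C(I)$. Then $(F-{}_{x}LF)(x,y)=(F_{y}-LF_{y})(x)$, so the hypothesis on $L$ applies verbatim with $g=F_{y}$ and produces $\sum_{\rho=0}^{r}\Gamma_{\rho,L}(x)\,\omega_{\rho}(F_{y};\Lambda_{\rho,L}(x))$. Because the univariate modulus $\omega_{\rho}(F_{y};\delta)$ of the slice is bounded above by the partial modulus $\omega_{\rho}(F;\delta,0)$ — the latter being precisely the supremum of the former over all admissible $y$ — this already yields the first sum in the claimed bound.

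For the second summand I would set $G:=F-{}_{y}MF$ and expand ${}_{x}L(G;x,y)=\sum_{e\in E}G(x_{e},y)A_{e}(x)$. For each node $x_{e}$ the slice $G(x_{e},y)$ is exactly the error of $M$ applied to the partial function $y\mapsto F(x_{e},y)$, so the hypothesis on $M$ gives
$$ |G(x_{e},y)|\leq \sum_{\sigma=0}^{s}\Gamma_{\sigma,M}(y)\,\omega_{\sigma}\big(F(x_{e},\cdot);\Lambda_{\sigma,M}(y)\big)\leq \sum_{\sigma=0}^{s}\Gamma_{\sigma,M}(y)\,\omega_{\sigma}(F;0,\Lambda_{\sigma,M}(y)), $$
where the last step again uses that a partial modulus dominates the univariate modulus of any single slice. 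The crucial feature is that this bound is independent of $e$, so it factors out of the sum over $E$, leaving the geometric factor $\sum_{e\in E}|A_{e}(x)|$.

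The main obstacle, and the step that makes the norm $\|L\|$ appear, is to control this factor: I would invoke the standard identity $\|L\|=\sup_{x\in I}\sum_{e\in E}|A_{e}(x)|$ for the operator norm of a discretely defined operator on $C(I)$ with the supremum norm, whence $\sum_{e\in E}|A_{e}(x)|\leq \|L\|$ for every $x\in I$. (That the supremum is actually attained uses that the distinct nodes $x_{e}$ can be interpolated by a continuous function of modulus one matching the signs of the $A_{e}(x)$, but for the inequality the trivial direction suffices.) Combining the two estimates through the triangle inequality then reproduces the asserted bound exactly.
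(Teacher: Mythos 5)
Your proof is correct and takes essentially the same route as the paper: the identical decomposition $F-({}_{x}L\circ{}_{y}M)F=(F-{}_{x}LF)+{}_{x}L\left(F-{}_{y}MF\right)$, the same application of the univariate hypotheses to the slices $F_{y}$ and $F(x_{e},\cdot)$, and the same domination of slice moduli by the partial moduli $\omega_{\rho}(F;\cdot,0)$ and $\omega_{\sigma}(F;0,\cdot)$. The only cosmetic difference is in the second term, where you expand the discrete sum and invoke $\|L\|=\sup_{x\in I}\sum_{e\in E}|A_{e}(x)|$, while the paper estimates abstractly via $|L(G_{y};x)|\leq\|L\|\,\|G_{y}\|_{\infty}$ after checking $\|L\|<\infty$ from the hypothesis on $L$ --- the same inequality in slightly different clothing.
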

\begin{proof}
We have
\begin{align*}
|\left[F-(_{x}L\circ _{y}M)F\right](x,y)|&=
|\left[(Id-_{x}L)+_{x}L\circ(Id-_{y}M)\right](F;x,y)|\\
&\leq|(Id-_{x}L)(F;x,y)|+|_xL\circ (Id-_yM)(F;x,y)|\\
&=:E_1(x,y)+E_2(x,y).
\end{align*}
Now, for $x\in I$,
\begin{align*}
E_1(x,y)&=|(Id-L)(F_y;x)|\leq\displaystyle\sum_{\rho=0}^r\Gamma_{\rho,L}(x)\cdot\omega_{\rho}\left(F_y;\Lambda_{\rho,L}(x)\right)\\
&\leq\displaystyle\sum_{\rho=0}^r\Gamma_{\rho,L}(x)\cdot\omega_{\rho}\left(F;\Lambda_{\rho,L}(x),0\right).
\end{align*}
Furthermore, with $G:=(Id-_yM)F$, we have
$$  E_2(x,y)=|_xL(G;x,y)|=|L(G_y;x)|\leq\|L(G_y)\|_{\infty,x\in I}. $$
Here again $G_y\in C(I)$ for all $y\in J$. By our assumption on $L$
we have for any $g\in C(I)$ that
$$  \|Lg\|_{\infty}\leq\left(1+\displaystyle\sum_{\rho=0}^r2^{\rho}\cdot\|\Gamma_{\rho,L}\|_{\infty}\right)\cdot\|g\|_{\infty}. $$
Hence $\|L\|<\infty$.

In the situation at hand we have
\begin{align*}
\|G_y\|_{\infty}&\!=\!\|\left[(Id\!-_yM)F\right]_y(\cdot)\|_{\infty}\!=\!\|(Id\!-_yM)F(\cdot,y)\|_{\infty}\!=\!\|(Id-_yM)F_x(y)\|_{\infty, x\in I}\\
&\leq\|\displaystyle\sum_{\sigma=0}^s\Gamma_{\sigma,M}(y)\cdot\omega_{\sigma}\left(F_x;\Lambda_{\sigma,M}(y)\right)\|_{\infty}
 \leq\displaystyle\sum_{\sigma=0}^s\Gamma_{\sigma,M}(y)\cdot\displaystyle\sup_{x\in I}\omega_{\sigma}\left(F_x;\Lambda_{\sigma,M}(y)\right)\\
&=\displaystyle\sum_{\sigma=0}^s\Gamma_{\sigma,M}(y)\cdot\omega_{\sigma}\left(F;0,\Lambda_{\sigma,M}(y)\right).
\end{align*}
Hence
\begin{align*}
E_1(x,y)+E_2(x,y)&\leq\displaystyle\sum_{\rho=0}^r\Gamma_{\rho,L}(x)\cdot\omega_{\rho}(F;\Lambda_{\rho,L}(x),0)\\
&+\|
L\|\cdot\displaystyle\sum_{\sigma=0}^s\Gamma_{\sigma,M}(y)\cdot\omega_{\sigma}(F;0,\Lambda_{\sigma,M}(y)).
\end{align*}
\end{proof}

\section{Application to bivariate Bernstein operators}
\begin{exem} \label{e1} If we take $L=B_{n_1}$ and $M=B_{n_2}$ with two
classical Bernstein operators mapping $C[0,1]$ into $C[0,1]$, then
for $F\in C([0,1]\times [0,1])$ and $(x,y)\in[0,1]\times [0,1]$
$$  \left(_{x}B_{n_1}\circ _{y}B_{n_2}\right)\left(F;x,y\right)
=\displaystyle\sum_{i_1=0}^{n_1}\sum_{i_2=0}^{n_2}F\left(\frac{i_1}{n_1},\frac{i_2}{n_2}\right)p_{n_1,i_1}(x)p_{n_2,i_2}(y),
$$
where $p_{n,i}(x)={i \choose n}x^i(1-x)^{n-i}, x\in[0,1],$ and
 \begin{align*}&\left|\left[F\!-\!\left(_{x}B_{n_1}\circ
_{y}B_{n_2}\right)F\right](x,y)\right|\!\leq\!
\displaystyle\frac{3}{2}\left[\omega_{2}\left(F;\sqrt{\frac{x(1\!-\!x)}{n_1}},0\right)\!+\!\omega_{2}\left(F;0,\sqrt{\frac{y(1\!-\!y)}{n_2}}\right)\right]\\
&\leq \displaystyle\frac{3}{2}\left[\|
F^{(2,0)}\|_{\infty}\frac{x(1-x)}{n_1}+\|
F^{(0,2)}\|_{\infty}\frac{y(1-y)}{n_2}\right],  F\in
C^{2,2}([0,1]\times [0,1]).
\end{align*}
\end{exem}
\begin{proof} We apply Theorem \ref{t1} with $r=s=2$,
$\Gamma_{0,B_n}=\Gamma_{1,B_n}=0$,
$\Gamma_{2,B_n}=\displaystyle\frac{3}{2}$,
$\Lambda_{2,B_{n}}(z)=\sqrt{\frac{z(1-z)}{n}}$, for
$n\in\{n_1,n_2\}$. The latter two choices are possible due to a
well-known result of P\u alt\u anea (see \cite{RP}) showing that for
the univariate Bernstein operators one has
$$ |f(x)-B_{n}(f,x)|\leq\displaystyle\frac{3}{2}\omega_{2}\left(f,\sqrt{\frac{x(1-x)}{n}}\right). $$
\end{proof}
\begin{rem}\label{r1} From the last inequality we get
$$\left|f(x)-B_{n}(f;x)\right|\leq \displaystyle\frac{3}{2}\| f^{\prime\prime}\|_{\infty}\displaystyle\frac{x(1-x)}{n},\, f\in C^2[0,1].$$
This is worse than the known inequality
$$ |f(x)-B_{n}(f;x)|\leq\displaystyle\frac{1}{2}\| f^{\prime\prime}\|_{\infty}\frac{x(1-x)}{n}.  $$
Our inequality was obtained from the more general statement in terms
of $\omega_{2}$ and well-known properties of the modulus.

However, we
can use instead Theorem 1 in \cite{Orman} (take $p=q=2$,
$p^{\prime}=q^{\prime}=0$, $r=s=0$,
$\Gamma_{0,0,B_{n_1}}(x)=\displaystyle\frac{1}{2}\cdot\frac{x(1-x)}{n_1}$
and
$\Gamma_{0,0,B_{n_2}}(y)=\displaystyle\frac{1}{2}\cdot\frac{y(1-y)}{n_2}$)
to arrive at
\begin{align*}
|\left[F-(_{x}B_{n_1}\circ_{y}B_{n_2})F\right](x,y)|&\leq\displaystyle\frac{1}{2}\frac{x(1-x)}{n_1}\|F^{(2,0)}\|_{\infty}+\displaystyle\frac{1}{2}\frac{y(1-y)}{n_2}
\|F^{(0,2)}\|_{\infty}\\
&+\displaystyle\frac{1}{4}\frac{x(1-x)y(1-y)}{n_1n_2}\|F^{(2,2)}\|_{\infty}\nonumber\\
&\leq\displaystyle\frac{1}{8n_1}\|F^{(2,0)}\|_{\infty}+\frac{1}{8n_2}\|F^{(0,2)}\|_{\infty}+\frac{1}{64n_1n_2}\|
F^{(2,2)}\|_{\infty}.
\end{align*}
An estimate of this kind can be found in Theorem 2.3 of
\cite{BarPop2008}.

Such three-term expressions typically appear if one writes (I
denoting the identity)
$$  I-A\circ B=I-A+I-B-(I-A)\circ (I-B)=(I-A)\oplus(I-B),$$
that is, if one uses the fact that the remainder of the tensor
product is the Boolean sum of the errors of the parametric extension.
The approach behind the above Theorem \ref{t1} invokes the
decomposition
$$ I-A\circ B=I-A+A\circ(I-B), $$
and therefore leads to the two-term bound.
\end{rem}

\section{The Bernstein type cubature formula revisited}

In this section we give a new upper bound for the approximation error
of cubature formula  associated with  the bivariate
Bernstein operators. The bounds are described in terms of
moduli of continuity of order two. The consideration of this
cubature formula  is motivated by B\u arbosu and Pop's result
\cite{BarPop2009}. It deems necessary to also correct some of the
wrong statements made there, in particular those with respect to
Boolean sums.

Integrating the bivariate Bernstein polynomials for $F\in
C([0,1]\times[0,1])$ one arrives at the following cubature formula
\begin{equation}\label{*}
\displaystyle\int_{0}^1\int_{0}^1F(x,y)dxdy=\displaystyle\frac{1}{(n_1+1)(n_2+1)}\sum_{i_1=0}^{n_1}\sum_{i_2=0}^{n_2}F\left(\frac{i_1}{n_1},\frac{i_2}{n_2}\right)
+R_{n_1,n_2}[F],
\end{equation}
where the remainder is bounded as follows:
$$ \left|R_{n_1,n_2}[F]\right|\leq \displaystyle\frac{1}{12n_1}\|F^{(2,0)}\|+\frac{1}{12n_2}\|F^{(0,2)}\|
+\frac{1}{144n_1n_2}\|F^{2,2}\|_{\infty},$$
 if  $F \in
C^{2,2}([0,1]\times[0,1]). $

 This follows from the three-term upper bound of Remark \ref{r1}.
 See \cite{BarPop2009} where the same integration error bound can be
 found.

 The two-term bound from Example \ref{e1} leads to the following
 \begin{thm} For the remainder term of the cubature formula (\ref{*}), $n_1,n_2\in
 \mathbb{N}$ and $F\in C([0,1]\times[0,1])$ there holds
 $$ \left|R_{n_1,n_2}[F]\right|\leq \displaystyle\frac{3}{2}\left[\int_{0}^1\omega_{2}\left(F;\sqrt{\frac{x(1-x)}{n_1}},0\right)dx
 +\int_{0}^1\omega_{2}\left(F;0,\sqrt{\frac{y(1-y)}{n_2}}\right)dy\right]. $$
 Moreover, if $F\in C^{2,2}([0,1]\times[0,1])$, then the above implies
 $$ |R_{n_1,n_2}[F]|\leq \displaystyle\frac{1}{4}\left(\frac{1}{n_1}\|F^{(2,0)}\|_{\infty}+\frac{1}{n_2}\|F^{(0,2)}\|_{\infty}\right). $$
 \end{thm}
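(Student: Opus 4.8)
The plan is to start from the pointwise two-term estimate established in Example~\ref{e1}, namely
\[
\left|\left[F-\left(_{x}B_{n_1}\circ _{y}B_{n_2}\right)F\right](x,y)\right|\leq
\frac{3}{2}\left[\omega_{2}\!\left(F;\sqrt{\tfrac{x(1-x)}{n_1}},0\right)+\omega_{2}\!\left(F;0,\sqrt{\tfrac{y(1-y)}{n_2}}\right)\right],
\]
and to integrate it over $[0,1]\times[0,1]$. The key observation is that the remainder $R_{n_1,n_2}[F]$ in the cubature formula \eqref{*} is exactly the integral of the pointwise approximation error: since integrating the tensor-product Bernstein polynomial against $dx\,dy$ reproduces the node sum with weights $\tfrac{1}{(n_1+1)(n_2+1)}$, we have
\[
R_{n_1,n_2}[F]=\int_0^1\!\!\int_0^1\left[F-\left(_{x}B_{n_1}\circ _{y}B_{n_2}\right)F\right](x,y)\,dx\,dy.
\]

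First I would apply the triangle inequality for integrals, $|R_{n_1,n_2}[F]|\leq\int_0^1\!\int_0^1\bigl|[F-(_{x}B_{n_1}\circ _{y}B_{n_2})F](x,y)\bigr|\,dx\,dy$, and insert the pointwise bound. The first modulus term depends on $x$ only, the second on $y$ only, so integrating the constant-in-the-other-variable integrand over the unit square collapses each double integral to a single integral (the trivial factor from integrating over the redundant variable is $1$). This yields directly the first, modulus-of-continuity, inequality of the theorem.

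Next I would pass to the $C^{2,2}$ refinement. For $F\in C^{2,2}$ one uses the standard estimate $\omega_2(F;\delta,0)\leq\delta^2\,\|F^{(2,0)}\|_\infty$ (and symmetrically in the second variable), exactly the step already invoked in Example~\ref{e1}. This replaces $\omega_2(F;\sqrt{x(1-x)/n_1},0)$ by $\tfrac{x(1-x)}{n_1}\|F^{(2,0)}\|_\infty$, leaving the elementary computation $\int_0^1 x(1-x)\,dx=\tfrac16$. Combining with the prefactor $\tfrac32$ gives $\tfrac32\cdot\tfrac16\cdot\tfrac1{n_1}=\tfrac1{4n_1}$ for the first term, and symmetrically $\tfrac1{4n_2}$ for the second, which is precisely the claimed $C^{2,2}$ bound.

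The step requiring the most care is verifying that the cubature remainder genuinely equals the integral of the pointwise error, i.e.\ that integration and the discretisation commute in the way asserted; this rests on the identity $\int_0^1 p_{n,i}(x)\,dx=\tfrac1{n+1}$ for the Bernstein basis, which makes the integrated tensor-product operator reproduce the weighted node sum in \eqref{*}. Everything after that is routine: the triangle inequality, the reduction of the double integral, the $C^2$ modulus estimate, and the evaluation $\int_0^1 x(1-x)\,dx=\tfrac16$. I expect no genuine obstacle, since all the analytic content is carried by Example~\ref{e1}; the theorem is essentially the integrated form of that pointwise estimate, and the only thing to watch is the bookkeeping of which variable each modulus term depends on so that the double integrals separate correctly.
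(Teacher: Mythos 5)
Your proposal is correct and follows essentially the same route as the paper: the theorem is the integrated form of the two-term pointwise bound of Example~\ref{e1}, obtained exactly as you describe via $\int_0^1 p_{n,i}(x)\,dx=\frac{1}{n+1}$ (so that $R_{n_1,n_2}[F]=\int_0^1\!\int_0^1[F-({}_xB_{n_1}\circ{}_yB_{n_2})F]\,dx\,dy$), the triangle inequality, separation of the two modulus terms, and for $F\in C^{2,2}$ the estimate $\omega_2(F;\delta,0)\leq\delta^2\|F^{(2,0)}\|_\infty$ together with $\int_0^1 x(1-x)\,dx=\frac16$, giving $\frac32\cdot\frac{1}{6n_i}=\frac{1}{4n_i}$. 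The only point the paper's (very short) proof makes explicit that you pass over silently is the continuity of $z\mapsto\omega_2(F;z,0)$, which justifies the integrability of the modulus terms; this is routine and does not affect the validity of your argument.
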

 \begin{proof} All that needs to be observed is that a function of
 type $\left[0,1/2\right]\ni z \to \omega_{2}(F;z,0)$ (with F fixed and
 continuous) is continuous, thus integrable.  The mixed moduli of smoothness of order $(k,l)$, with $k,l\in
 \mathbb{N}_0$,
 given for $\delta_1,\delta_2\geq 0$ by
 \begin{align*}\omega_{k,l}(F;\delta_1,\delta_2)&:=\displaystyle\sup\left\{ \left|\displaystyle\sum_{\nu=0}^k\sum_{r=0}^{l}(-1)^{\nu+\mu}{k\choose \nu}{l\choose \mu}
 F(x+\nu\cdot h_1, y+\mu\cdot h_2)\right|: \right.\\
 & \left. (x,y),(x+kh_1,y+lh_2)\in[0,1]^2, |h_i|\leq\delta_i,i=1,2
 \right\},
 \end{align*}
 is a positive, continuous and non-decreasing function with respect
 to both variables (see \cite{habil}, \cite{rusa}).
 For continuous $F$ these moduli are continuous in $\delta_1$ and
 $\delta_2$ and satisfy
 $$
 \omega_{k}(F;\delta_1,0)=\omega_{k,0}(F;\delta_1,\delta_2)\textrm{
 and }
\omega_{k}(F;0,\delta_1)=\omega_{0,k}(F;\delta_1,\delta_2).
$$
The latter is only relevant to us for $k=2$.
 \end{proof}

\section{The composite bivariate Bernstein operators}

In this section we construct the bivariate composite Bernstein
operators and the order of convergence is considered involving the
second modulus of continuity. Also, some inequalities of
Tchebycheff-Gr\"{u}ss type will be proven. These results are
obtained using some general inequalities published in
\cite{AnaMaria}, \cite{teza}. In order to give the main results of
this section, we recall the following facts:
\begin{itemize}
\item[1.] For $a,b\in \mathbb{R}$, $a<b$, and $f\in
\mathbb{R}^{[a,b]}$ the Bernstein polynomial of degree $n\in
\mathbb{N}$ associated to $f$ is given for $x\in[a,b]$, by
$$ B_{n}^{[a,b]}(f;x)=\displaystyle\frac{1}{(b-a)^n}\displaystyle\sum_{i=0}^n{n\choose i}(x-a)^i(b-x)^{n-i}f\left(a+i\frac{b-a}{n}\right).$$
\item[2.] For $g\in C^2[a,b]$ one has
$$ g(x)-B_{n}^{[a,b]}(g;x)=-\displaystyle\frac{(x-a)(b-x)}{2n}g^{\prime\prime}(\xi_{x}),\,\xi_{x}\in(a,b). $$
\end{itemize}
If we divide $[0,1]$ into subintervals
$\left[\displaystyle\frac{k-1}{m},\frac{k}{m}\right]$,
$k=1,\dots,m\in \mathbb{N}$, then on
$\left[\displaystyle\frac{k-1}{m},\frac{k}{m}\right]$ we consider
\begin{align*}
B_{n,k}(f;x)&=B_{n}^{\left[\frac{k-1}{m},\frac{k}{m}\right]}(f;x)\\
&=m^{n}\sum_{i=0}^{n}{n\choose
i}\left(x-\frac{k-1}{m}\right)^i\left(\frac{k}{m}-x\right)^{n-i}f\left(\frac{kn-n+i}{nm}\right).
\end{align*}
Now we compose the $B_{n,k}$ to obtain the positive linear operator
$\overline{B}_{n,m}:\mathbb{R}^{[0,1]}\to C[0,1]$,
$$\overline{B}_{n,m}(f;x)=B_{n,k}(f;x),\textrm{ if } x\in
\left[\displaystyle\frac{k-1}{m},\frac{k}{m}\right],\, 1\leq k\leq
m.$$

From now on (subscripted) symbols $n...$ will refer to a polynomial degree. \linebreak (Subscripted) numbers $m...$ will be related to grids.
 Each function $\overline{B}_{n,m}(f)$ is a Schoenberg spline of degree $n$ with
 respect to the knot sequence given as\linebreak  follows:
 $$\begin{array}{rr}
 0=\displaystyle\frac{0}{m}& (n+1)-\textrm{fold}\\
 \vspace{-0.4cm}\\
 \displaystyle\frac{1}{m}& n-\textrm{fold}\\
  \vdots & \vdots\\
 \displaystyle\frac{m-1}{m}& n-\textrm{fold}\\
 \vspace{-0.4cm}\\
 1=\displaystyle\frac{m}{m}& (n+1)-\textrm{fold}
 \end{array}$$
 We renounce to give a precise numbering of the knots since this
 will not be needed below.
Thus $\overline{B}_{n,m}$ reproduces linear functions, interpolates
 at $\displaystyle\frac{k}{m}$, $0\leq k\leq m$ and has operator
 norm $\|\overline{B}_{n,m}\|=1$.

 For $n_1,n_2,m_1,m_2\in \mathbb{N}$ we now consider the parametric
 extension
 $_{x}\overline{B}_{n_1,m_1}$ and $_{y}\overline{B}_{n_2,m_2}$ and
 their product $_{x}\overline{B}_{n_1,m_1}\circ _{y}\overline{B}_{n_2,m_2}
 $. For brevity the latter will be denote by
 $\overline{\mathcal{B}}$.

 For $(x,y)\in
\left[\displaystyle\frac{k-1}{m_1},\frac{k}{m_1}\right]\times
\left[\displaystyle\frac{l-1}{m_2},\frac{l}{m_2}\right]$, it follows
\begin{align*}
\overline{\mathcal{B}}(f;x,y)&=m_1^{n_1}\cdot
m_2^{n_2}\displaystyle\sum_{i=0}^{n_1}\sum_{j=0}^{n_2}{n_1\choose
i}{n_2\choose j}
\left(x-\frac{k-1}{m_1}\right)^i\left(\frac{k}{m_1}-x\right)^{n_1-i}\\
&\cdot\left(y-\frac{l-1}{m_2}\right)^j
\left(\frac{l}{m_2}-y\right)^{n_2-j}
f\left(\frac{k-1}{m_1}+\frac{i}{m_1n_1},\frac{l-1}{m_2}+\frac{j}{n_2m_2}\right)
\end{align*}
and \begin{align*} \left|
f(x,y)\!-\!\overline{\mathcal{B}}(f;x,y)\right|
&\!=\frac{\left(x\!-\!\frac{k\!-\!1}{m}\right)\left(\frac{k}{m_1}\!-\!x\right)}{2n_1}
\|f^{(2,0)}\|_{\infty}\!+\!\frac{\left(y\!-\!\frac{l-1}{m_2}\right)\left(\frac{l}{m_2}\!-\!y\right)}{2n_2}\|f^{(0,2)}\|_{\infty}\\
&+\frac{\left(x-\frac{k-1}{m}\right)\left(\frac{k}{m_1}-x\right)\left(y-\frac{l-1}{m_2}\right)\left(\frac{l}{m_2}-y\right)}{4n_1n_2}
\|f^{(2,2)}\|_{\infty},\end{align*} where
 $f\in C^{2,2}([0,1]\times [0,1])$.

Using Theorem 1 again we get
\begin{thm}\label{t3} For $f\in C([0,1]\times[0,1])$, $n_1,n_2,m_1,m_2\in \mathbb{N}$ and
$(x,y)\in[0,1]\times[0,1]$ there holds
\begin{align*}\left| f(x,y)- \overline{\mathcal{B}}(f;x,y\right|&\leq \displaystyle\frac{3}{2}
 \left\{\omega_{2}\left(f;\sqrt{\frac{\left(x-\frac{k-1}{m_1}\right)\left(\frac{k}{m_1}-x\right)}{n_1}},0\right)\right.\\
 &+\left.\omega_{2}\left(f;0,\sqrt{\frac{\left(y-\frac{l-1}{m_2}\right)\left(\frac{l}{m_2}-y\right)}{n_2}}\right)\right\},\end{align*}
 if  $(x,y)\in \left[\frac{k-1}{m_1},\frac{k}{m_1}\right]\times
 \left[\frac{l-1}{m_2},\frac{l}{m_2}\right]$, $1\leq k\leq m_1$, $1\leq l\leq m_2$.
\end{thm}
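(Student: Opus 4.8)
The plan is to invoke Theorem~\ref{t1} once more, now with $L=\overline{B}_{n_1,m_1}$ and $M=\overline{B}_{n_2,m_2}$. The one genuinely new ingredient is the univariate error bound for the composite operator on a single subinterval, from which the data $\Gamma_{\rho,L},\Lambda_{\rho,L}$ (and their analogues for $M$) can be read off.

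First I would transport P\u alt\u anea's inequality \cite{RP} from $[0,1]$ to an arbitrary compact interval $[a,b]$. The Bernstein operator is covariant under the affine map $\phi(t)=a+t(b-a)$: writing $g=f\circ\phi$ and $t=\frac{x-a}{b-a}$, one has $B_n^{[a,b]}(f;x)=B_n(g;t)$ and $f(x)=g(t)$, while the second difference of $g$ is merely that of $f$ with step length rescaled by $(b-a)$, so that $\omega_2(g;\delta)=\omega_2(f;(b-a)\delta)$. Since $t(1-t)=\frac{(x-a)(b-x)}{(b-a)^2}$, the factor $(b-a)$ cancels and one obtains
$$\left|f(x)-B_n^{[a,b]}(f;x)\right|\leq\frac{3}{2}\,\omega_2\!\left(f;\sqrt{\frac{(x-a)(b-x)}{n}}\right).$$
Applying this with $[a,b]=\left[\frac{k-1}{m},\frac{k}{m}\right]$, and recalling that on this subinterval $\overline{B}_{n,m}$ is exactly $B_{n,k}=B_n^{[(k-1)/m,k/m]}$, supplies the required estimate for the composite operator at each $x$.

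I would then feed this into Theorem~\ref{t1} with $r=s=2$, $\Gamma_{0,L}=\Gamma_{1,L}=0$, $\Gamma_{2,L}=\frac{3}{2}$ and $\Lambda_{2,L}(x)=\sqrt{\left(x-\frac{k-1}{m_1}\right)\left(\frac{k}{m_1}-x\right)/n_1}$ (symmetrically for $M$ on the $y$-subinterval). The decisive simplification is that the text has already recorded $\|\overline{B}_{n_1,m_1}\|=1$; because the second summand in Theorem~\ref{t1} carries the factor $\|L\|$, this norm equalling $1$ is precisely what leaves the constant $\frac{3}{2}$ intact on both terms. A direct substitution then produces the claimed two-term bound on the product cell.

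The main obstacle, such as it is, lies in the first step: one must confirm the affine covariance of the Bernstein operator together with the rescaling $\omega_2(g;\delta)=\omega_2(f;(b-a)\delta)$ carefully enough that the argument of the modulus comes out exactly as $\sqrt{(x-a)(b-x)/n}$, with no residual powers of $(b-a)$. Once this change-of-variables bookkeeping is in place, the passage to the bivariate statement via Theorem~\ref{t1} is purely mechanical.
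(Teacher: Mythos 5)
Your proposal coincides with the paper's own proof: the paper likewise reduces to the univariate estimate $\left|f(x)-\overline{B}_{n_1,m_1}(f;x)\right|\leq\frac{3}{2}\,\omega_{2}\left(f;\sqrt{\left(x-\frac{k-1}{m_1}\right)\left(\frac{k}{m_1}-x\right)/n_1}\right)$ on each subinterval (the affine transport of P\u alt\u anea's inequality, whose change-of-variables bookkeeping you spell out and the paper merely asserts) and then applies Theorem~\ref{t1} with $r=s=2$. Your explicit remark that $\|\overline{B}_{n_1,m_1}\|=1$ is what keeps the constant $\frac{3}{2}$ on the second term is exactly the point the paper uses implicitly, so the two arguments are essentially identical.
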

\begin{proof} For the univariate case we have
$$ \left| \overline{B}_{n_1,m_1}(f;x)-f(x) \right|\leq \displaystyle\frac{3}{2}
\omega_{2}\left(f;\sqrt{\frac{\left(x-\frac{k-1}{m_1}\right)\left(\frac{k}{m_1}-x\right)}{n_1}}\right),
$$
for $x\in\left[\frac{k-1}{m_1},\frac{k}{m_1}\right]$, $1\leq k\leq
m_1$. Here $\omega_2$ is the second order modulus over $[0,1]$. An
analogous inequality holds for $\overline{B}_{n_2,m_2}$.

The theorem mentioned implies, with $r=s=2$, the inequality claimed.
\end{proof}
\begin{rem}As mentioned earlier, for $g\in C^{2}[a,b]$ one has
$$ |g(x)-B_{n}^{[a,b]}(g;x)|=\left|-\displaystyle\frac{(x-a)(b-x)}{2n}g^{\prime\prime}(\xi_{x})\right| \leq \displaystyle\frac{(b-a)^2}{8n}\|
g^{\prime\prime}\|_{[a,b],\infty}.$$ For
$[a,b]=\left[\frac{k-1}{m},\frac{k}{m}\right]$, the last expression
equals
$\displaystyle\frac{1}{8m^2n}\|g^{\prime\prime}\|_{\left[\frac{k-1}{m},\frac{k}{m}\right],\infty}$.

If $f\in C^{2,2}([0,1]\times[0,1])$ and $(x,y)\in[0,1]\times[0,1]$,
using Theorem 1 in \cite{Orman}, this leads to
 $$\left| f(x,y)- \overline{\mathcal{B}}(f;x,y\right|\leq
 \displaystyle\frac{1}{8m_1^2n_1}\|f^{(2,0)}\|_{\infty}+\frac{1}{8m_2^2n_2}\|f^{(0,2)}\|_{\infty}+\frac{1}{64m_1^2n_1m_2^2n_2}\|f^{(2,2)}\|_{\infty}. $$
For $m_1=m_2=1$ this is exactely the inequality in Remark \ref{r1}.
\end{rem}

\section{A Chebyshev-Gr\"{u}ss inequality}
In what follows we present an inequality for the bivariate composite
Bernstein operators, expressed in term of least concave majorant of
continuity. Let $C(X)$ be the Banach lattice of real valued
continuous functions defined on the compact metric space $(X,d)$.
\begin{defin} Let $f\in C(X)$. If, for $t\in[0,\infty)$, the
quantity
$$\omega_{d}(f;t):=\sup\left\{|f(x)-f(y)|, d(x,y)\leq t\right\}  $$
is the usual modulus of continuity, then its least concave mojorant
is given by
$$\tilde{\omega}_{d}(f,t)=\left\{\begin{array}{l}\displaystyle\sup_{0\leq x<t\leq y\leq d(X)}\frac{(t-x)\omega_{d}(f,y)+(y-t)
\omega_{d}(f,x)}{y-x},0\leq t\leq d(X),\\
\vspace{-0.4cm}\\
 \omega_{d}(f,d(X)), t>d(X), \end{array}\right. $$
 and $d(X)<\infty$ is the diameter of the compact space $X$.
 \end{defin}
Denote
$$ Lip_{r}=\left\{g\in C(X)\left|
|g|_{Lip_{r}}:=\displaystyle\sup_{d(x,y)>0}\frac{|g(x)-g(y)|}{
d^r(x,y)}<\infty\right. \right\}, \, 0<r\leq 1.
$$
$Lip_{r}$ is a dense subspace of $C(X)$ equipped with the supremum
norm $\|\cdot\|_{\infty}$ and $|\cdot|_{Lip_{r}}$ is a seminorm on
$Lip_{r}$.

The $K$-functional with respect to $(Lip_{r},|\cdot|_{Lip_{r}})$ is
given by
 $$ K(t,f;C(X),Lip_{r}):=\displaystyle\inf_{g\in Lip_{r}}\left\{
 \|f-g\|_{\infty}+t|g|_{Lip_{r}}\right\},\textrm{ for } f\in C(X)\textrm{ and } t\geq 0.  $$
 \begin{lem}\cite{m10} Every continuous function $f$ on $X$
 satisfies
 $$ K\left(\displaystyle\frac{t}{2},f; C(X),Lip_{1}\right)=\displaystyle\frac{1}{2}\tilde{\omega}_{d}(f,t), 0\leq t\leq d(X). $$
 \end{lem}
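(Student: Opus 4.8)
The plan is to prove the asserted identity by establishing the two inequalities separately, throughout using the fact that $\tilde{\omega}_{d}(f,\cdot)$ is by definition the smallest concave function lying on or above $\omega_{d}(f,\cdot)$ on $[0,d(X)]$, together with $\tilde{\omega}_{d}(f,0)=0$ (which holds because the continuity of $f$ on the compact space $X$ forces $\omega_{d}(f,0)=0$, and the concave hull of a non-negative function vanishing at the origin again vanishes there).

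For the estimate $\frac{1}{2}\tilde{\omega}_{d}(f,t)\le K\!\left(\frac{t}{2},f;C(X),Lip_{1}\right)$ I would fix an arbitrary $g\in Lip_{1}$ and note that for all $x,y\in X$,
$$|f(x)-f(y)|\le |f(x)-g(x)|+|g(x)-g(y)|+|g(y)-f(y)|\le 2\|f-g\|_{\infty}+|g|_{Lip_{1}}\,d(x,y).$$
Passing to the supremum over $d(x,y)\le\delta$ shows that the affine map $\delta\mapsto 2\|f-g\|_{\infty}+|g|_{Lip_{1}}\,\delta$ is a concave majorant of $\omega_{d}(f,\cdot)$, so by minimality it dominates $\tilde{\omega}_{d}(f,\cdot)$. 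Evaluating at $\delta=t$ and halving gives $\frac{1}{2}\tilde{\omega}_{d}(f,t)\le\|f-g\|_{\infty}+\frac{t}{2}|g|_{Lip_{1}}$; taking the infimum over $g\in Lip_{1}$ yields the claim.

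For the reverse inequality I would use the concavity of $s\mapsto\tilde{\omega}_{d}(f,s)$ to select a supporting line at $s=t$, that is, constants $a\ge0$ and $b\ge0$ with $\tilde{\omega}_{d}(f,t)=a+bt$ and $\tilde{\omega}_{d}(f,s)\le a+bs$ for every $s\in[0,d(X)]$ (nonnegativity of $a$ comes from $\tilde{\omega}_{d}(f,0)=0$, that of $b$ from monotonicity; at the endpoint $t=d(X)$ a one-sided superdifferential is used). I then produce a single competitor for the $K$-functional via the two Lipschitz regularizations
$$g_{-}(x)=\inf_{z\in X}\bigl[f(z)+b\,d(x,z)\bigr],\qquad g_{+}(x)=\sup_{z\in X}\bigl[f(z)-b\,d(x,z)\bigr].$$
A routine check shows both are $b$-Lipschitz, and the bound $f(x)-f(z)\le\omega_{d}(f,d(x,z))\le a+b\,d(x,z)$ gives $0\le f-g_{-}\le a$ and $0\le g_{+}-f\le a$. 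Putting $g=\frac{1}{2}(g_{-}+g_{+})$ one obtains $|g|_{Lip_{1}}\le b$ and, writing $f-g=\frac{1}{2}(f-g_{-})+\frac{1}{2}(f-g_{+})$ with the two summands in $[0,a/2]$ and $[-a/2,0]$, the crucial bound $\|f-g\|_{\infty}\le a/2$. Hence $K\!\left(\frac{t}{2},f\right)\le\|f-g\|_{\infty}+\frac{t}{2}|g|_{Lip_{1}}\le\frac{a}{2}+\frac{bt}{2}=\frac{1}{2}\tilde{\omega}_{d}(f,t)$, and combining the two inequalities closes the proof.

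The main obstacle is getting the constant exactly right. A single one-sided regularization $g_{-}$ only delivers $\|f-g_{-}\|_{\infty}\le a$ and hence the weaker bound $a+\frac{bt}{2}$; it is the symmetrization $g=\frac{1}{2}(g_{-}+g_{+})$ that halves the uniform error to $a/2$ while preserving the Lipschitz constant $b$, thereby producing precisely the factor $\frac{1}{2}$ in front of $\tilde{\omega}_{d}(f,t)$. The remaining care points are the existence of the supporting line and the correct signs of $a$ and $b$, including the boundary case $t=d(X)$.
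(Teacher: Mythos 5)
Your proof is correct. Note first that the paper itself offers no proof of this lemma: it is quoted from Mitjagin and Semenov \cite{m10}, so there is no internal argument to compare against; what you have reconstructed is essentially the classical proof of this identity (often attributed to Brudnyi). Both halves check out: the lower bound correctly exploits minimality of the least concave majorant against the affine majorant $\delta\mapsto 2\|f-g\|_{\infty}+|g|_{Lip_1}\delta$, and the upper bound via a supporting line $a+bs\geq \tilde{\omega}_d(f,s)$ with equality at $s=t$, combined with the inf- and sup-convolutions $g_{\mp}$ and the symmetrization $g=\frac{1}{2}(g_-+g_+)$, is exactly the device that halves the uniform error to $a/2$ while keeping $|g|_{Lip_1}\leq b$, yielding $K(t/2,f)\leq \frac{1}{2}(a+bt)=\frac{1}{2}\tilde{\omega}_d(f,t)$. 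Two small points deserve a sentence each: the case $t=0$ is not covered by your supporting-line construction (the right derivative of $\tilde{\omega}_d(f,\cdot)$ at $0$ may be $+\infty$, so no finite-slope supporting line need exist), but it is trivial since $K(0,f)=0$ by density of $Lip_1$ in $C(X)$ and $\tilde{\omega}_d(f,0)=0$; and the latter vanishing, which you invoke, genuinely uses the uniform continuity of $f$ on the compact space $X$ (for a merely bounded $f$ the least concave majorant could stay bounded away from $0$ in the limit $t\to 0^{+}$, even though its value at $t=0$ itself is $0$ by the endpoint-discontinuity allowed for concave functions). With those remarks your argument is complete and self-contained, which is more than the paper provides.
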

Let $H:C(X^2)\to C(X^2)$ be a positive linear
 operator reproducing constant function and define
 $$ T(f,g;x,y)=H(fg;x,y)-H(f;x,y)\cdot H(g;x,y). $$
 In order to give an inequality of  Chebyshev-Gr\"{u}ss type we
 recall a general result given by M. Rusu in \cite{teza}.
 \begin{thm}\label{t2.1}\cite{teza} If $f,g\in C(X^2)$ and $x,y\in X$ fixed, then
 the inequality
 $$ | T(f,g;x,y)\leq \displaystyle\frac{1}{4}\tilde{\omega}_{d}\left(f; 4\sqrt{H\left(d^2(\cdot,(x,y));x,y\right)}\right)
 \cdot \tilde{\omega}_{d}\left(g; 4\sqrt{H\left(d^2(\cdot,(x,y));x,y\right)}\right) $$
 holds, where $H\left(d^{2}(\cdot,(x,y));x,y\right)$ is the second
 moment of the bivariate operator $H$. We consider here the
 Euclidian metric $d_{2}$.
 \end{thm}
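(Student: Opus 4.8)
The plan is to fix the point $(x,y)$ and treat $h\mapsto H(h;x,y)$ as a positive linear functional on $C(X^{2})$ that reproduces constants. By the Riesz representation theorem it is given by a Borel probability measure $\mu=\mu_{x,y}$ on $X^{2}$, so that $H(h;x,y)=\int_{X^{2}}h\,d\mu$. Writing $Lh:=H(h;x,y)$, a direct expansion of the definition gives the key identity
\[
T(f,g;x,y)=\int_{X^{2}}(f-Lf)(g-Lg)\,d\mu=\langle f-Lf,\,g-Lg\rangle_{L^{2}(\mu)} .
\]
In particular $T(f,f;x,y)=\|f-Lf\|_{L^{2}(\mu)}^{2}\ge 0$, the map $f\mapsto\sqrt{T(f,f;x,y)}$ is a seminorm, and the Cauchy--Schwarz inequality in $L^{2}(\mu)$ yields
\[
|T(f,g;x,y)|\le\sqrt{T(f,f;x,y)}\cdot\sqrt{T(g,g;x,y)} .
\]
Thus the whole problem reduces to estimating the single-function quantity $\sqrt{T(f,f;x,y)}$, which I would bound by a $K$-functional and then rewrite through the least concave majorant.

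Abbreviate $\beta:=H\!\left(d^{2}(\cdot,(x,y));x,y\right)$. For any decomposition $f=f_{1}+(f-f_{1})$ with $f_{1}\in Lip_{1}$, the triangle inequality for the seminorm gives $\sqrt{T(f,f)}\le\sqrt{T(f_{1},f_{1})}+\sqrt{T(f-f_{1},f-f_{1})}$. Since $T(h,h)=H(h^{2})-H(h)^{2}$ is the variance of $h$ under $\mu$, it does not exceed $\int h^{2}\,d\mu\le\|h\|_{\infty}^{2}$, so the second term is at most $\|f-f_{1}\|_{\infty}$. For the Lipschitz term I would recentre, replacing $f_{1}$ by $\phi:=f_{1}-f_{1}(x,y)$ (which leaves $T$ unchanged) and dropping the squared mean, to get $T(f_{1},f_{1})=T(\phi,\phi)\le\int\phi^{2}\,d\mu\le|f_{1}|_{Lip_{1}}^{2}\int d^{2}(\cdot,(x,y))\,d\mu=\beta\,|f_{1}|_{Lip_{1}}^{2}$. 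Hence $\sqrt{T(f,f)}\le\|f-f_{1}\|_{\infty}+\sqrt{\beta}\,|f_{1}|_{Lip_{1}}$ for every $f_{1}\in Lip_{1}$, and taking the infimum produces $\sqrt{T(f,f;x,y)}\le K\!\left(\sqrt{\beta},f;C(X^{2}),Lip_{1}\right)$.

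Now I would invoke the Lemma (with $X^{2}$ in place of $X$ and $t=2\sqrt{\beta}$) to convert $K\!\left(\sqrt{\beta},f;C(X^{2}),Lip_{1}\right)=\tfrac12\tilde{\omega}_{d}(f;2\sqrt{\beta})$ whenever $2\sqrt{\beta}\le d(X^{2})$; in the range $2\sqrt{\beta}>d(X^{2})$ the $K$-functional has already saturated at $\tfrac12\operatorname{osc}(f)=\tfrac12\omega_{d}(f;d(X^{2}))=\tfrac12\tilde{\omega}_{d}(f;d(X^{2}))$, so $\sqrt{T(f,f)}\le\tfrac12\tilde{\omega}_{d}(f;2\sqrt{\beta})$ persists by monotonicity of $\tilde{\omega}_{d}$. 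The identical bound holds for $g$. Feeding the two estimates into the Cauchy--Schwarz inequality of the first paragraph gives $|T(f,g;x,y)|\le\tfrac14\,\tilde{\omega}_{d}(f;2\sqrt{\beta})\,\tilde{\omega}_{d}(g;2\sqrt{\beta})$, and since $\tilde{\omega}_{d}$ is nondecreasing in its second argument and $2\sqrt{\beta}\le 4\sqrt{\beta}$, this implies the asserted inequality (in fact with the sharper radius $2\sqrt{\beta}$).

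The main obstacle is the single-function estimate $\sqrt{T(f,f)}\le\tfrac12\tilde{\omega}_{d}(f;2\sqrt{\beta})$, and within it two points require care. First, the Lipschitz bound $T(f_{1},f_{1})\le\beta\,|f_{1}|_{Lip_{1}}^{2}$ must be obtained by recentring at $(x,y)$ and reading $\int d^{2}(\cdot,(x,y))\,d\mu$ as the prescribed second moment; this is exactly what forces the radius to be governed by $\sqrt{H(d^{2}(\cdot,(x,y)))}$. Second, the passage from the $K$-functional to $\tilde{\omega}_{d}$ when $2\sqrt{\beta}$ exceeds the diameter $d(X^{2})$ cannot use the Lemma's equality directly and must instead be closed by the common saturation of $K$ and $\tilde{\omega}_{d}$. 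Everything else (Riesz representation, bilinearity and positive semidefiniteness of $T$, Cauchy--Schwarz, and the triangle inequality for the seminorm) is routine.
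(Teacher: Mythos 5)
Your proof is correct, but note first that the paper does not actually prove Theorem \ref{t2.1}: it is quoted from Rusu's thesis \cite{teza}, and the only argument of this kind carried out in the paper is the proof of the companion result for functionals, Theorem \ref{t6.1}. Measured against that proof, your route is the same in spirit --- positive bilinear form, Cauchy--Schwarz, Lipschitz approximants, the $K$-functional, and the lemma of Mitjagin--Semenov \cite{m10} --- but differs in three concrete ways. (i) You apply Cauchy--Schwarz once to reduce everything to the diagonal quantity $\sqrt{T(f,f;x,y)}$ and then use the triangle inequality for the $L^{2}(\mu)$-seminorm $h\mapsto\sqrt{T(h,h;x,y)}$, whereas the paper expands $D(f-r+r,\,g-s+s)$ into four terms and refactors the resulting bound into a product; the two devices are equivalent, yours is tidier. (ii) Because an evaluation point $(x,y)$ is available in the operator setting, you recentre the Lipschitz part at $(x,y)$, which produces exactly the single moment $H\left(d^{2}(\cdot,(x,y));x,y\right)$ appearing in the statement; the functional case of Theorem \ref{t6.1} has no distinguished point and must centre by a second integration, which is why it ends with the iterated moment $L^{2}\left(d^{2}(\cdot,\cdot)\right)$ --- your adaptation is the right one for Theorem \ref{t2.1}. (iii) You explicitly close the range $2\sqrt{\beta}>d(X^{2})$, where the Lemma's equality does not apply, via the saturation $K(t,f)\leq\frac{1}{2}\,\mathrm{osc}(f)=\frac{1}{2}\tilde{\omega}_{d}\left(f;d(X^{2})\right)$ obtained from constant approximants; the paper's proof of Theorem \ref{t6.1} silently skips this check, so your version is the more careful one. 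Finally, your argument yields the radius $2\sqrt{H\left(d^{2}(\cdot,(x,y));x,y\right)}$, sharper than the stated $4\sqrt{\cdot}$ and consistent with the factor $2$ appearing in Theorem \ref{t6.1}; the stated inequality then follows, as you observe, from the monotonicity of $\tilde{\omega}_{d}$ in its second argument.
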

 \begin{pr}For $f,g\in C(X^2)$ and $x,y\in X$ fixed, the following
 Gr\"{u}ss type inequality holds
 \begin{align*}&|\overline{\mathcal{B}}(fg;x,y)-\overline{\mathcal{B}}(f;x,y)\cdot \overline{\mathcal{B}}(g;x,y)
 |\leq \displaystyle\frac{1}{4}\tilde{\omega}_{d_2}\left(f;
4\sqrt{\Psi(x,y)}\right)\cdot \tilde{\omega}_{d_2}\left(g;
 4\sqrt{\Psi(x,y)}\right)\\
 &\leq \displaystyle\frac{1}{4}\tilde{\omega}_{d_2}\left(f; 2\sqrt{\frac{1}{n_1m_1^2}+\frac{1}{n_2m_2^2}}\right)\cdot \tilde{\omega}_{d_2}\left(g;
 2\sqrt{\frac{1}{n_1m_1^2}+\frac{1}{n_2m_2^2}}\right)
 \end{align*}
 where $\Psi(x,y)=\displaystyle\frac{\left(x-\frac{k-1}{m_1}\right)\left(\frac{k}{m_1}-x\right)}{n_1}
  +\frac{\left(y-\frac{l-1}{m_2}\right)\left(\frac{l}{m_2}-y\right)}{n_2}$
  and $(x,y)\in \left[\frac{k-1}{m_1},\frac{k}{m_1}\right]\times \left[\frac{l-1}{m_2},\frac{l}{m_2}\right]$.
 \end{pr}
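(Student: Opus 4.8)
The plan is to apply the general Chebyshev--Gr\"uss inequality of Theorem~\ref{t2.1} directly to the operator $H = \overline{\mathcal{B}} = {}_{x}\overline{B}_{n_1,m_1}\circ {}_{y}\overline{B}_{n_2,m_2}$. The first thing I must check is that $\overline{\mathcal{B}}$ satisfies the hypotheses of that theorem, namely that it is a positive linear operator on $C(X^2)$ reproducing constants, where $X = [0,1]$ with the Euclidean metric $d_2$. Positivity and linearity are inherited from the univariate $\overline{B}_{n,m}$ (each $B_{n,k}$ is a genuine Bernstein operator, hence positive), and reproduction of constants follows since each $\overline{B}_{n,m}$ reproduces linear functions (in particular constants) and the tensor product of two constant-reproducing operators reproduces constants. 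Once this is verified, the theorem gives the first inequality immediately, with the role of the second moment $H(d_2^2(\cdot,(x,y));x,y)$ played by some function of $(x,y)$ that I must now identify with $\Psi(x,y)$.

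The key computational step is therefore to evaluate the second moment. Because $d_2^2((u,v),(x,y)) = (u-x)^2 + (v-y)^2$ splits additively, and because $\overline{\mathcal{B}}$ is a tensor product reproducing constants in each variable, the second moment separates as
\begin{align*}
\overline{\mathcal{B}}\!\left(d_2^2(\cdot,(x,y));x,y\right)
&= \overline{B}_{n_1,m_1}\!\left((\cdot - x)^2;x\right)
 + \overline{B}_{n_2,m_2}\!\left((\cdot - y)^2;y\right).
\end{align*}
Each univariate term is the second central moment of a composite Bernstein operator. Using fact~2 of Section~5 (the exact error representation with $g'' \equiv 2$ for $g(t)=(t-x)^2$ on the active subinterval $[\tfrac{k-1}{m_1},\tfrac{k}{m_1}]$, on which $\overline{B}_{n_1,m_1}$ acts as $B_{n_1}^{[(k-1)/m_1,\,k/m_1]}$), the second central moment equals $\dfrac{(x-\frac{k-1}{m_1})(\frac{k}{m_1}-x)}{n_1}$, and symmetrically for the $y$-term. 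Summing the two gives exactly $\Psi(x,y)$, which establishes the first displayed inequality after substituting $H(d_2^2) = \Psi(x,y)$ into Theorem~\ref{t2.1}.

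For the second inequality I need only a uniform bound $\Psi(x,y) \le \tfrac{1}{4}\!\left(\tfrac{1}{n_1 m_1^2} + \tfrac{1}{n_2 m_2^2}\right)$. This reduces to the elementary estimate that on the subinterval of length $1/m_1$ the product $(x-\frac{k-1}{m_1})(\frac{k}{m_1}-x)$ is at most $\tfrac{1}{4 m_1^2}$, attained at the midpoint, so the first summand of $\Psi$ is at most $\tfrac{1}{4 n_1 m_1^2}$, and likewise for the second. Hence $4\sqrt{\Psi(x,y)} \le 2\sqrt{\tfrac{1}{n_1 m_1^2} + \tfrac{1}{n_2 m_2^2}}$, and the second inequality follows from the monotonicity of the least concave majorant $\tilde{\omega}_{d_2}$ in its second argument. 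The main obstacle, such as it is, lies in the second-moment computation: one must be careful that the exact error formula of fact~2 applies on each subinterval with the correct interval endpoints, and that the tensor-product structure legitimately separates the additive metric $d_2^2$; the final maximization and the monotonicity appeal are routine.
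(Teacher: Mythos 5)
Your proposal is correct and follows exactly the route the paper intends: the Proposition is stated there without a written proof, as a direct application of Theorem \ref{t2.1} with $H=\overline{\mathcal{B}}$, and your verification of the hypotheses, the tensor-product separation of $d_2^2$ giving $\overline{\mathcal{B}}\left(d_2^2(\cdot,(x,y));x,y\right)=\Psi(x,y)$ via the exact error representation for $(t-x)^2$, and the midpoint bound $\Psi(x,y)\le\frac{1}{4}\left(\frac{1}{n_1m_1^2}+\frac{1}{n_2m_2^2}\right)$ supply precisely the details the paper leaves implicit. No gaps; nothing further is needed.
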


 \section{A cubature formula based on $\overline{\mathcal{B}}$}
In this section some upper-bounds of the error of cubature formula
associated with the bivariate Bernstein operators are given . In
\cite{Barbosu} D. B\u{a}rbosu, D.
 Micl\u{a}u\c{s} introduced the following cubature formula:
 $$\displaystyle\int_{0}^{1}\int_{0}^{1}f(x,y)dxdy=\sum_{k=1}^{m_1}\sum_{l=1}^{m_2}\int_{\frac{k-1}{m_1}}^{\frac{k}{m}}
 \int_{\frac{l-1}{m_2}}^{\frac{l}{m_2}}f(x,y)dxdy  $$
 $$ \approx \displaystyle\sum_{k=1}^{m_1}\sum_{l=1}^{m_2}\int_{\frac{k-1}{m_1}}^{\frac{k}{m}}
 \int_{\frac{l-1}{m_2}}^{\frac{l}{m_2}}\overline{\mathcal{B}}(f;x,y)dxdy
=\int_{0}^1\int_{0}^1\overline{\mathcal{B}}(f;x,y)dxdy:=\overline{\mathcal{I}}(f).
$$
 It follows
$$  \int_{\frac{k-1}{m_1}}^{\frac{k}{m}}
 \int_{\frac{l-1}{m_2}}^{\frac{l}{m_2}}\overline{\mathcal{B}}(f;x,y)dxdy$$  $$
 =m_1^{n_1}m_2^{n_2}\displaystyle\sum_{i=0}^{n_1}\sum_{j=0}^{n_2}{n_1\choose i}{n_2\choose j}\int_{\frac{k-1}{m_1}}^{\frac{k}{n_1}}
 \left(x-\frac{k-1}{m_1}\right)^{i}\left(\frac{k}{m_1}-x\right)^{n_1-i}dx$$
 $$ \cdot\int_{\frac{l-1}{m_2}}^{\frac{l}{m_2}}\left(y-\frac{l-1}{m_2}\right)^j\left(\frac{l}{m_2}-y\right)^{n_2-j}dy f
 \left(\frac{k-1}{m_1}+\frac{i}{m_1n_1},\frac{l-1}{m_2}+\frac{j}{n_2m_2}\right) $$
 $$=\displaystyle\sum_{i=0}^{n_1}\sum_{j=0}^{n_2}A_{n_1,n_2,m_1,m_2}f
 \left(\frac{k-1}{m_1}+\frac{i}{m_1n_1},\frac{l-1}{m_2}+\frac{j}{n_2m_2}\right),$$
 where $A_{n_1,n_2,m_1,m_2}=\displaystyle\frac{1}{m_1m_2(n_1+1)(n_2+1)}$.
 \begin{thm}For $f\in C^{2,2}\left([0,1]\times [0,1]\right)$ it
 follows
 \begin{align*} \left|\int_{0}^1\int_{0}^1f(x,y)dxdy-\overline{\mathcal{I}}(f) \right|&\leq
 \displaystyle\frac{1}{12n_1m_1^2}\|f^{(2,0)}\|_{\infty}+\frac{1}{12n_2m_2^2}\|f^{(0,2)}\|\\
&+\frac{1}{144n_1n_2m_1^2m_2^2}\|f^{(2,2)}\|_{\infty}.\end{align*}
 \end{thm}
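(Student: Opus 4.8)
The plan is to reduce everything to the pointwise three-term estimate for $\overline{\mathcal{B}}$ in the $C^{2,2}$ case stated just before Theorem \ref{t3}, and then integrate it cell by cell. The starting observation is that, by the very definition of $\overline{\mathcal{I}}$, one has $\overline{\mathcal{I}}(f)=\int_0^1\int_0^1\overline{\mathcal{B}}(f;x,y)\,dx\,dy$, so the integration error is exactly the integral of the pointwise approximation error,
$$\int_0^1\int_0^1 f(x,y)\,dx\,dy-\overline{\mathcal{I}}(f)=\int_0^1\int_0^1\left[f(x,y)-\overline{\mathcal{B}}(f;x,y)\right]\,dx\,dy.$$
Applying the triangle inequality for integrals and decomposing $[0,1]\times[0,1]$ into the grid rectangles $\left[\frac{k-1}{m_1},\frac{k}{m_1}\right]\times\left[\frac{l-1}{m_2},\frac{l}{m_2}\right]$, I would bound the left-hand side by the sum over $k$ and $l$ of the integrals of $\left|f-\overline{\mathcal{B}}f\right|$ over each rectangle, inserting on the $(k,l)$-rectangle the corresponding branch of the pointwise bound.

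The proof then reduces to three elementary integrations, because each of the three terms in the pointwise estimate factorizes into a product of a one-variable integral in $x$ and one in $y$. The single building block is
$$\int_{(k-1)/m}^{k/m}\left(x-\frac{k-1}{m}\right)\left(\frac{k}{m}-x\right)\,dx=\int_0^{1/m}u\left(\frac{1}{m}-u\right)\,du=\frac{1}{6m^3},$$
which is independent of the index $k$ (substitute $u=x-\frac{k-1}{m}$). For the first term the $y$-integration over the rectangle only produces the cell width $1/m_2$, so each rectangle contributes $\frac{1}{2n_1}\|f^{(2,0)}\|_{\infty}\cdot\frac{1}{6m_1^3}\cdot\frac{1}{m_2}$; summing over all $m_1m_2$ rectangles collapses this to $\frac{1}{12n_1m_1^2}\|f^{(2,0)}\|_{\infty}$. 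The second term is treated symmetrically and yields $\frac{1}{12n_2m_2^2}\|f^{(0,2)}\|_{\infty}$. For the mixed term both one-variable integrals are of the type above, so each rectangle contributes $\frac{1}{4n_1n_2}\|f^{(2,2)}\|_{\infty}\cdot\frac{1}{6m_1^3}\cdot\frac{1}{6m_2^3}$, and summation over the $m_1m_2$ rectangles gives $\frac{1}{144n_1n_2m_1^2m_2^2}\|f^{(2,2)}\|_{\infty}$. Adding the three contributions reproduces exactly the asserted bound.

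Since nothing genuinely delicate occurs, I do not expect a real obstacle. The only points requiring care are, first, that $\overline{\mathcal{B}}$ is defined piecewise, so that on each rectangle the matching $(k,l)$-branch of the pointwise estimate must be used; and second, the bookkeeping in the final summation, where one must check that the factor $m_1m_2$ coming from counting the rectangles cancels precisely the surplus powers of $m_1$ and $m_2$ in the denominators, leaving the clean $m_1^{-2}$, $m_2^{-2}$ and $m_1^{-2}m_2^{-2}$ dependence. Both are routine once the cell-wise factorization has been set up.
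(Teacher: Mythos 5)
Your proposal is correct and follows essentially the same route as the paper's own proof: write the cubature error as the integral of the pointwise error $f-\overline{\mathcal{B}}f$, split $[0,1]^2$ into the $m_1m_2$ grid rectangles, insert the three-term pointwise bound on each cell, and use the elementary integral $\int_{(k-1)/m}^{k/m}\bigl(x-\frac{k-1}{m}\bigr)\bigl(\frac{k}{m}-x\bigr)\,dx=\frac{1}{6m^3}$ before summing. All constants and the bookkeeping with the factor $m_1m_2$ match the paper exactly.
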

 {\bf Proof.} We have
 \begin{align*}& \left|\int_{0}^1\int_{0}^1f(x,y)dxdy-\overline{\mathcal{I}}(f) \right| \\
 &=\left|\displaystyle\sum_{k=1}^{m_1}\sum_{l=1}^{m_2}\int_{\frac{k-1}{m_1}}^{\frac{k}{m_1}}\int_{\frac{l-1}{m_2}}^{\frac{l}{m_2}}f(x,y)dxdy-
 \displaystyle\sum_{k=1}^{m_1}\sum_{l=1}^{m_2}\int_{\frac{k-1}{m_1}}^{\frac{k}{m_1}}\int_{\frac{l-1}{m_2}}^{\frac{l}{m_2}}\overline{\mathcal{B}}(f;x,y)dxdy\right|\\
& \leq\displaystyle\sum_{k=1}^{m_1}\sum_{l=1}^{m_2}\int_{\frac{k-1}{m_1}}^{\frac{k}{m_1}}\int_{\frac{l-1}{m_2}}^{\frac{l}{m_2}}\left| f(x,y)-\overline{\mathcal{B}}(f;x,y)\right|dxdy \\
   &=
   \displaystyle\sum_{k=1}^{m_1}\sum_{l=1}^{m_{2}}\int_{\frac{k-1}{m_1}}^{\frac{k}{m_1}}\int_{\frac{l-1}{m_2}}^{\frac{l}{m_2}}\left[
   \frac{\left(x\!-\!\frac{k-1}{m_1}\right)\left(\frac{k}{m_1}\!-\!x\right)}{2n_1}\|f^{(2,0)}\|_{\infty}
   \!+\!\frac{\left(y-\frac{l-1}{m_2}\right)\left(\frac{l}{m_2}\!-\!y\right)}{2n_2}\|f^{(0,2)}\|_{\infty}\right.\\
   &+\left. \frac{\left(x-\frac{k-1}{m_1}\right)\left(\frac{k}{m_1}-x\right)\left(y-\frac{l-1}{m_2}\right)\left(\frac{l}{m_2}-y\right)}{4n_1n_2}
 \|  f^{(2,2)}\|_{\infty}\right]dxdy \\
&\leq \displaystyle\sum_{k=1}^{m_1}\sum_{l=1}^{m_2}\left[\frac{1}{12n_1m_1^3m_2}\|f^{(2,0)}\|_{\infty}+
\frac{1}{12n_2m_2^3m_1}\|f^{(0,2)}\|_{\infty}+\frac{1}{144n_1n_2m_1^3m_2^3}\|f^{(2,2)}\|_{\infty}\right]\\
&=\displaystyle\frac{1}{12n_1m_1^2}\|f^{(2,0)}\|_{\infty}+\frac{1}{12n_2m_2^2}\|f^{(0,2)}\|+\frac{1}{144n_1n_2m_1^2m_2^2}\|f^{(2,2)}\|_{\infty}. \end{align*}

One further estimate is given in
\begin{thm}For $f\in C^{2,2}\left([0,1]\times [0,1]\right)$ it
 follows
 $$ \left|\int_{0}^1\int_{0}^1f(x,y)dxdy-\overline{\mathcal{I}}(f) \right|\leq
 \displaystyle\frac{1}{4}\left\{\frac{1}{m_1^2n_{1}}\|f^{(2,0)}\|_{\infty}+\frac{1}{m_2^2n_2}\|f^{(0,2)}\|_{\infty}\right\}.
 $$
 \end{thm}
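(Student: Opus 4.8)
The plan is to mimic the structure of the preceding theorem (the three-term composite cubature bound), but to invoke the sharper two-term pointwise estimate of Theorem \ref{t3} in place of the Taylor-type three-term estimate for $f-\overline{\mathcal{B}}(f)$. First I would split the integration error over the grid of $m_1m_2$ subrectangles, writing
$$ \int_0^1\!\!\int_0^1 f\,dx\,dy-\overline{\mathcal{I}}(f)=\sum_{k=1}^{m_1}\sum_{l=1}^{m_2}\int_{\frac{k-1}{m_1}}^{\frac{k}{m_1}}\!\!\int_{\frac{l-1}{m_2}}^{\frac{l}{m_2}}\bigl[f(x,y)-\overline{\mathcal{B}}(f;x,y)\bigr]\,dy\,dx, $$
and estimate the absolute value of the integrand on each subrectangle by Theorem \ref{t3}.

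The next step is to linearize the two partial moduli appearing in Theorem \ref{t3}. Since $f\in C^{2,2}$, the standard property $\omega_2(\varphi;\delta)\le \delta^2\|\varphi''\|_\infty$ of the second-order modulus, applied in each variable separately, yields
$$ \omega_2\Bigl(f;\sqrt{\tfrac{(x-\frac{k-1}{m_1})(\frac{k}{m_1}-x)}{n_1}},0\Bigr)\le \frac{\bigl(x-\frac{k-1}{m_1}\bigr)\bigl(\frac{k}{m_1}-x\bigr)}{n_1}\,\|f^{(2,0)}\|_\infty, $$
and symmetrically the $y$-modulus is bounded by $\frac{(y-\frac{l-1}{m_2})(\frac{l}{m_2}-y)}{n_2}\|f^{(0,2)}\|_\infty$.

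What remains is an elementary computation of the resulting one-dimensional integrals. On the subinterval $[\frac{k-1}{m_1},\frac{k}{m_1}]$ the substitution $u=x-\frac{k-1}{m_1}$ gives $\int_0^{1/m_1}u\bigl(\frac{1}{m_1}-u\bigr)\,du=\frac{1}{6m_1^3}$, while the orthogonal integration over a $y$-subinterval contributes its length $\frac{1}{m_2}$. Thus the $f^{(2,0)}$-part of a single subrectangle is $\frac{3}{2}\cdot\frac{1}{n_1}\cdot\frac{1}{6m_1^3}\cdot\frac{1}{m_2}\|f^{(2,0)}\|_\infty=\frac{1}{4n_1m_1^3m_2}\|f^{(2,0)}\|_\infty$, and summation over the $m_1m_2$ subrectangles collapses this to $\frac{1}{4n_1m_1^2}\|f^{(2,0)}\|_\infty$. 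The $f^{(0,2)}$-part is treated identically with the roles of $(m_1,n_1)$ and $(m_2,n_2)$ interchanged, giving $\frac{1}{4n_2m_2^2}\|f^{(0,2)}\|_\infty$, and adding the two parts produces exactly the claimed bound.

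There is no genuine analytic obstacle in this argument, since each ingredient is either the already-proved pointwise estimate of Theorem \ref{t3}, a textbook inequality for $\omega_2$, or a trivial polynomial integral. The only point demanding care is the bookkeeping of the powers of $m_1$ and $m_2$: one must check that the factor $m_1^{-3}m_2^{-1}$ produced on a single subrectangle, multiplied by the $m_1m_2$ summands, leaves precisely the $m_1^{-2}$ advertised in the statement. Conceptually, the whole point is that replacing the three-term estimate of the preceding theorem by the two-term bound of Theorem \ref{t3} suppresses the mixed $\|f^{(2,2)}\|_\infty$ term, exactly paralleling the passage from Remark \ref{r1} to Example \ref{e1}.
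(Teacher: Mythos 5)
Your proposal is correct and follows essentially the same route as the paper: integrate the two-term pointwise bound of Theorem \ref{t3} over each of the $m_1m_2$ subrectangles, bound the partial second moduli by $\delta^2\|f^{(2,0)}\|_\infty$ (resp.\ $\delta^2\|f^{(0,2)}\|_\infty$), and evaluate the elementary integral $\frac{1}{6m_1^3}$ before summing. Your bookkeeping of the constants ($\frac{3}{2}\cdot\frac{1}{6}=\frac{1}{4}$, and $m_1^{-3}m_2^{-1}$ times $m_1m_2$ summands giving $m_1^{-2}$) matches the paper's computation exactly.
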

 \begin{proof} Integrating the error given in Theorem \ref{t3} leads
 to
 \begin{align*}&\left|\int_{0}^1\int_{0}^1f(x,y)dxdy-\overline{\mathcal{I}}(f) \right|\\
&\leq
 \displaystyle\frac{3}{2}\sum_{k=1}^{m_1}\sum_{l=1}^{m_2}\left\{
 \frac{1}{m_2}\int_{\frac{k-1}{m_1}}^{\frac{k}{m_1}}\omega_{2}\left(f;\sqrt{\frac{\left(x-\frac{k-1}{m_1}\right)\left(\frac{k}{m_1}-x\right)}{n_1}},0\right)dx\right.
 \\
 &+\left.\displaystyle\frac{1}{m_1}\int_{\frac{l-1}{m_2}}^{\frac{l}{m_2}}\omega_{2}\left(f;0,\sqrt{\frac{\left(y-\frac{l-1}{m_2}\right)
 \left(\frac{l}{m_2}-y\right)}{n_2}}\right)dy\right\}.
 \end{align*}
 Since $f\in C^{2,2}([0,1]\times [0,1])$ leads to
 \begin{align*} \left|\int_{0}^1\int_{0}^1f(x,y)dxdy\!-\!\overline{\mathcal{I}}(f) \right|&\leq
 \displaystyle\frac{3}{2}\sum_{k=1}^{m_1}\sum_{l=1}^{m_2}\left\{
 \frac{1}{m_2}\|f^{(2,0)}\|_{\infty}\int_{\frac{k-1}{m_1}}^{\frac{k}{m_1}}\frac{\left(x\!-\!\frac{k-1}{m_1}\right)\left(\frac{k}{m_1}\!-\!x\right)}{n_1}dx\right.
 \\
 &+\left.\displaystyle\frac{1}{m_1}\|f^{(0,2)}\|_{\infty}\int_{\frac{l-1}{m_2}}^{\frac{l}{m_2}}\frac{\left(y-\frac{l-1}{m_2}\right)
 \left(\frac{l}{m_2}-y\right)}{n_1}dy\right\}
 \\
 &=\displaystyle\frac{3}{2}\sum_{k=1}^{m_1}\sum_{l=1}^{m_2}\left\{
 \frac{1}{6m_1^3m_2n_1}\|f^{(2,0)}\|_{\infty}\!+\! \frac{1}{6m_1m_2^3n_2}\|f^{(0,2)}\|_{\infty}\right\} \\
 &= \displaystyle\frac{1}{4}\left\{\frac{1}{m_1^2n_{1}}\|f^{(2,0)}\|_{\infty}+\frac{1}{m_2^2n_2}\|f^{(0,2)}\|_{\infty}\right\}.
 \end{align*}
 \end{proof}

\section{Non-multiplicativity of the cubature formula}
In this section we will give some results which suggest how
non-multiplicative the functional
$\overline{\mathcal{I}}(f)=\displaystyle\int_{0}^1\int_{0}^1\overline{\mathcal{B}}(f;(x,y))dx
dy$ is.

Let $(X,d)$ be  a compact metric space and $L:C(X)\to \mathbb{R}$ be
a positive linear functional reproducing constant.  We consider the
positive bilinear functional
$$ D(f,g):=L(fg)-L(f)L(g). $$
\begin{thm}\label{t6.1} If $f,g\in C(X)$, $(X,d)$
 a compact metric space, then the inequality
$$|D(f,g)\leq\displaystyle\frac{1}{4}\tilde{\omega}_d\left(f;2\sqrt{L^2(d^2(\cdot,\cdot))}\right)\tilde{\omega}_d\left(g;2\sqrt{L^2(d^2(\cdot,\cdot))}\right)$$
holds.
\end{thm}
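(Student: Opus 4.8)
The plan is to exploit that $D$ is a symmetric, positive semidefinite bilinear form and then to transfer a Lipschitz estimate to arbitrary continuous functions via the $K$-functional identity quoted from \cite{m10}. First I would record the algebraic fact that, since $L$ reproduces constants and is positive, $D(h,h)=L(h^2)-L(h)^2=L\big((h-L(h))^2\big)\ge 0$ for every $h\in C(X)$. Hence $D$ is positive semidefinite, so the Cauchy--Schwarz inequality $|D(f,g)|\le\sqrt{D(f,f)}\,\sqrt{D(g,g)}$ is available.

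Next I would produce two diagonal estimates. On one hand $D(h,h)\le L(h^2)\le\|h\|_\infty^2$ for all $h\in C(X)$. On the other hand, writing the variance in product form, $D(h,h)=\tfrac12\,(L\otimes L)\big((h(s)-h(t))^2\big)$ (which follows from $L(1)=1$), and using $(h(s)-h(t))^2\le|h|_{Lip_1}^2\,d^2(s,t)$ together with the positivity of the product functional $L\otimes L$, I obtain $D(h,h)\le\tfrac12\,|h|_{Lip_1}^2\,L^2(d^2(\cdot,\cdot))$ for $h\in Lip_1$. Here $L^2(d^2(\cdot,\cdot))=(L\otimes L)(d^2)$ is exactly the quantity occurring in the statement; write $M$ for it.

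With these estimates in hand, I would fix arbitrary $f_1,g_1\in Lip_1$ and split, by bilinearity,
$$ D(f,g)=D(f-f_1,g-g_1)+D(f-f_1,g_1)+D(f_1,g-g_1)+D(f_1,g_1). $$
Bounding each of the four terms by Cauchy--Schwarz and applying the sup-norm estimate to the factors coming from $f-f_1,\,g-g_1$ and the Lipschitz estimate to those coming from $f_1,\,g_1$, the four contributions assemble into a single product
$$ |D(f,g)|\le\Big(\|f-f_1\|_\infty+\sqrt{M/2}\,|f_1|_{Lip_1}\Big)\Big(\|g-g_1\|_\infty+\sqrt{M/2}\,|g_1|_{Lip_1}\Big). $$
Taking the infimum over $f_1$ and then over $g_1$ (both factors being nonnegative) turns the right-hand side into $K\big(\sqrt{M/2},f;C(X),Lip_1\big)\cdot K\big(\sqrt{M/2},g;C(X),Lip_1\big)$.

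Finally I would invoke the Lemma from \cite{m10}, namely $K(t/2,h;C(X),Lip_1)=\tfrac12\tilde{\omega}_d(h,t)$, with $t=\sqrt{2M}$, to rewrite each $K$-functional as $\tfrac12\tilde{\omega}_d(\cdot\,;\sqrt{2M})$; this already yields $|D(f,g)|\le\tfrac14\tilde{\omega}_d(f;\sqrt{2M})\tilde{\omega}_d(g;\sqrt{2M})$, and since $\sqrt{2M}\le 2\sqrt{M}$ and $\tilde{\omega}_d$ is nondecreasing, the claimed bound follows. I expect the main obstacle to be the bookkeeping in the four-term split: one must pair the correct diagonal estimate (sup-norm versus Lipschitz) with each factor so that the cross terms recombine \emph{exactly} into the announced product, and one must be slightly careful when the argument $\sqrt{2M}$ exceeds $d(X)$, where $\tilde{\omega}_d$ has already saturated and the inequality is only easier.
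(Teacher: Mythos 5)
Your proposal is correct and follows the same skeleton as the paper's proof: positivity of $D$ plus Cauchy--Schwarz, the sup-norm bound $D(h,h)\le\|h\|_{\infty}^2$, a Lipschitz bound on the diagonal, the four-term bilinear split, passage to the infimum to produce $K$-functionals, and the lemma from \cite{m10} to convert these into $\tilde{\omega}_d$. The one genuine difference lies in the Lipschitz diagonal estimate: the paper represents $L$ by a Borel probability measure and applies a Jensen/Cauchy--Schwarz step inside the double integral, arriving at $D(r,r)\le |r|_{Lip_1}^2\, L^2\left(d^2(\cdot,\cdot)\right)$, whereas you use the exact variance identity $D(h,h)=\frac{1}{2}(L\otimes L)\bigl((h(s)-h(t))^2\bigr)$, which avoids the measure representation altogether and gains a factor $\frac{1}{2}$; this yields the sharper intermediate bound $\frac{1}{4}\tilde{\omega}_d\left(f;\sqrt{2M}\right)\tilde{\omega}_d\left(g;\sqrt{2M}\right)$ with $M=L^2\left(d^2(\cdot,\cdot)\right)$, from which the stated inequality follows by monotonicity of $\tilde{\omega}_d$, exactly as you argue. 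Your closing caveat is also well taken: the lemma of Mitjagin and Semenov is stated only for $0\le t\le d(X)$, and since only the inequality $K(t/2,\cdot)\le\frac{1}{2}\tilde{\omega}_d(\cdot,t)$ is needed (it persists for $t>d(X)$ because $\tilde{\omega}_d$ saturates there while $K$ is bounded by half the total oscillation, as one sees by taking a constant competitor), your write-up is actually more careful on this point than the paper's, which applies the identity at $t=2\sqrt{M}$ without comment even though $2\sqrt{M}$ may exceed $d(X)$.
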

\begin{proof}Let $f,g\in C[a,b]$ and $r,s\in Lip_1$. Using the
Cauchy-Schwarz inequality for positive linear functional gives
$$|L(f)|\leq L(|f|)\leq\sqrt{L(f^2)\cdot L(1)}=\sqrt{L(f^2)},$$
so we have
$$ D(f,f)=L(f^2)-L(f)^2\geq 0. $$
Therefore, $D$ is a positive bilinear form on $C(X)$. Using the
Cauchy-Schwarz inequality for $D$ it follows
$$ |D(f,g)|\leq\sqrt{D(f,f)D(g,g)}\leq\|f\|_{\infty}\|g\|_{\infty}. $$
Since $L$ is a positive linear functional we can represent as
follows
$$ L(f):=\displaystyle\int_{X}f(t)d\mu(t), $$
where $\mu$ is a Borel probability measure on $X$, i.e.,
$\displaystyle\int_{X}d\mu(t)=1$. For $r\in Lip_1$, it follows
\begin{align*}
 D(r,r)&=L(r^2)-L(r)^2=\displaystyle\int_{X}r^2(t)d\mu(t)-\left(\int_{X}r(u)d\mu(u)\right)^2\\
 &=\displaystyle\int_{X}\left(r(t)-\int_{X}r(u)d\mu(u)\right)^2d\mu(t)=
 \displaystyle\int_{X}\left(\int_{X}(r(t)-r(u))d\mu(u)\right)^{2}d\mu(t)\\
 &\leq\displaystyle\int_{X}\left(\int_{X}(r(t)-r(u))^2d\mu(u)\right)d\mu(t)\\
 &\leq|r|_{Lip_1}^2\int_{X}\left(\int_{X}d^2(t,u)d\mu(u)\right)d\mu(t)\\
&=|r|_{Lip_1}^2L^t\left[L\left(d^2(t,\cdot)\right)\right]
 =|r|_{Lip_1}^2L^2\left(d^2(\cdot,\cdot)\right).
\end{align*}
For $r,s\in Lip_1$ we have
$$ |D(r,s)|\leq\sqrt{D(r,r)D(s,s)}\leq|r|_{Lip_1}|s|_{Lip_1}L^2\left(d(\cdot,\cdot)\right). $$
Moreover, for $f\in C(X)$ and $s\in Lip_1$, we have the estimate
$$ |D(f,s)|\leq\sqrt{D(f,f)D(s,s)}\leq \|f\|_{\infty}|s|_{Lip_1}\sqrt{L^2\left(d(\cdot,\cdot)\right)}. $$
In a similar way, if $r\in Lip_1$ and $g\in C(X)$, we have
$$ |D(r,g)|\leq\sqrt{D(r,r)D(g,g)}\leq \|g\|_{\infty}|r|_{Lip_1}\sqrt{L^2\left(d(\cdot,\cdot)\right)}. $$
Let $f,g\in C(X)$ be fixed and $r,s\in Lip_1$ arbitrary, then
\begin{align*}
|D(f,g)|&=|D(f-r+r,g-s+s)|\\
&\leq
|D(f-r,g-s)+|D(f-r,s)|+|D(r,g-s)|+|D(r,s))|\\
&\leq\|f-r\|_{\infty}\cdot\|g-s\|_{\infty}+\|f-r\|_{\infty}\cdot|s|_{Lip_1}\sqrt{L^2\left(d^2(\cdot,\cdot)\right)}\\
&+\|g-s\|_{\infty}\cdot|r|_{Lip_1}\sqrt{L^2\left(d^2(\cdot,\cdot)\right)}
+|r|_{Lip_1}|s|_{Lip_1}L^2\left(d^2(\cdot,\cdot)\right)\\
&=\left\{\|f-r\|_{\infty}+|r|_{Lip_1}\sqrt{L^2\left(d^2(\cdot,\cdot)\right)}\right\}
\left\{\|g-s\|_{\infty}+|s|_{Lip_1}\sqrt{L^2\left(d^2(\cdot,\cdot)\right)}\right\}.
\end{align*}
Passing to the infimum over $r$ and $s$, respectively, leads to
\begin{align*}
|D(f,g)|&\leq
K\left(\sqrt{L^2\left(d^2(\cdot,\cdot)\right)},f;C(X),Lip_1\right)\cdot
K\left(\sqrt{L^2\left(d^2(\cdot,\cdot)\right)},g;C(X),Lip_1\right)\\
&\leq\frac{1}{4}\tilde{\omega}\left(f;2\sqrt{L^2\left(d^2(\cdot,\cdot)\right)}\right)\tilde{\omega}\left(g;2\sqrt{L^2\left(d^2(\cdot,\cdot)\right)}\right)
\end{align*}
\end{proof}

Applying Theorem \ref{t6.1} for $L(f)=\overline{\mathcal{I}}(f)$ we
obtain the following result:
\begin{cor}\label{c6.1} If $f,g\in C([0,1]\times[0,1])$, then
\begin{align}\label{e6.1}\left|\overline{\mathcal{I}}(fg)-\overline{\mathcal{I}}(f)\overline{\mathcal{I}}(g)\right|\!&\leq\!
\displaystyle\frac{1}{4}\tilde{\omega}_{d_2}\left(f;2\sqrt{\frac{1}{3}\left(1\!+\!\frac{1}{n_1m_1^2}\!+\!\frac{1}{n_2m_2^2}\right)}\right)\\
&\cdot
\tilde{\omega}_{d_2}\left(g;2\sqrt{\frac{1}{3}\left(1\!+\!\frac{1}{n_1m_1^2}\!+\!\frac{1}{n_2m_2^2}\right)}\right)\nonumber
\end{align}
\end{cor}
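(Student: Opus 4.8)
The plan is to invoke Theorem~\ref{t6.1} with the positive linear functional $L:=\overline{\mathcal{I}}$ on $X^2=[0,1]\times[0,1]$ equipped with the Euclidean metric $d_2$. Since $\overline{\mathcal{B}}$ reproduces constants and $\int_0^1\int_0^1 1\,dx\,dy=1$, the functional $\overline{\mathcal{I}}$ reproduces constants, so Theorem~\ref{t6.1} applies and yields
\[
\left|\overline{\mathcal{I}}(fg)-\overline{\mathcal{I}}(f)\overline{\mathcal{I}}(g)\right|
\leq\frac14\,\tilde{\omega}_{d_2}\!\left(f;2\sqrt{L^2(d_2^2(\cdot,\cdot))}\right)\,
\tilde{\omega}_{d_2}\!\left(g;2\sqrt{L^2(d_2^2(\cdot,\cdot))}\right).
\]
Thus everything reduces to evaluating the iterated second moment $L^2(d_2^2(\cdot,\cdot))=\overline{\mathcal{I}}^{(x,y)}\big[\overline{\mathcal{I}}^{(u,v)}\big(d_2^2((x,y),(u,v))\big)\big]$, and the entire content of the corollary is the identity $L^2(d_2^2(\cdot,\cdot))=\tfrac13\big(1+\tfrac{1}{n_1m_1^2}+\tfrac{1}{n_2m_2^2}\big)$.

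First I would exploit the additive structure of the squared Euclidean distance, $d_2^2((x,y),(u,v))=(x-u)^2+(y-v)^2$, so that both applications of $\overline{\mathcal{I}}$ split into a sum of two one-dimensional problems. Because $\overline{\mathcal{B}}=\,{}_{x}\overline{B}_{n_1,m_1}\circ{}_{y}\overline{B}_{n_2,m_2}$ acts coordinatewise and reproduces linear functions, I first record the low-order univariate moments: applying $\overline{\mathcal{B}}$ to the monomials $1$ and $u$ and then integrating gives $\overline{\mathcal{I}}(1)=1$ and $\overline{\mathcal{I}}(u)=\tfrac12$.

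The key step is the second moment $\overline{\mathcal{I}}(u^2)$. Here I would use that on each subinterval $\big[\tfrac{k-1}{m_1},\tfrac{k}{m_1}\big]$ the exact remainder formula for $C^2$ functions (item~2 of Section~5) gives $\overline{B}_{n_1,m_1}(s^2;x)=x^2+\tfrac{1}{n_1}\big(x-\tfrac{k-1}{m_1}\big)\big(\tfrac{k}{m_1}-x\big)$, since $(s^2)''=2$. Integrating over $[0,1]$, splitting into the $m_1$ subintervals and using $\int_a^b(x-a)(b-x)\,dx=\tfrac{(b-a)^3}{6}$ with $b-a=\tfrac{1}{m_1}$, I obtain
\[
\overline{\mathcal{I}}(u^2)=\int_0^1 x^2\,dx+\frac{1}{n_1}\sum_{k=1}^{m_1}\frac{1}{6m_1^3}=\frac13+\frac{1}{6n_1m_1^2},
\]
and symmetrically $\overline{\mathcal{I}}(v^2)=\tfrac13+\tfrac{1}{6n_2m_2^2}$.

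Finally I would assemble the iterated moment. Applying $\overline{\mathcal{I}}^{(u,v)}$ to $(x-u)^2+(y-v)^2$ and using the three moments above yields, for fixed $(x,y)$,
\[
\overline{\mathcal{I}}^{(u,v)}\big(d_2^2((x,y),(u,v))\big)=(x^2-x)+(y^2-y)+\frac23+\frac{1}{6n_1m_1^2}+\frac{1}{6n_2m_2^2}.
\]
Applying $\overline{\mathcal{I}}^{(x,y)}$ once more, together with $\overline{\mathcal{I}}(x^2-x)=-\tfrac16+\tfrac{1}{6n_1m_1^2}$ and its analogue in $y$, collapses everything to the announced value $\tfrac13\big(1+\tfrac{1}{n_1m_1^2}+\tfrac{1}{n_2m_2^2}\big)$. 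Substituting $2\sqrt{L^2(d_2^2(\cdot,\cdot))}$ into the bound from Theorem~\ref{t6.1} then gives exactly inequality~\eqref{e6.1}. I expect the only delicate point to be the bookkeeping in the second-moment computation---tracking which subinterval endpoints enter and correctly summing the $m_1$ (resp.\ $m_2$) contributions---since the remainder is a direct substitution into Theorem~\ref{t6.1}.
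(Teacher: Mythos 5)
Your proposal is correct and reproduces exactly the value $\frac{1}{3}\left(1+\frac{1}{n_1m_1^2}+\frac{1}{n_2m_2^2}\right)$ that the paper obtains, but the computational core runs along a different track. The skeleton is shared: both you and the paper apply Theorem \ref{t6.1} to $L=\overline{\mathcal{I}}$, which is positive, linear and reproduces constants, so that the whole content is the evaluation of the iterated second moment $L^2\left(d_2^2(\cdot,\cdot)\right)$. The paper evaluates it using the discrete representation of $\overline{\mathcal{I}}$ derived in Section 7, namely equal weights $\frac{1}{m_1m_2(n_1+1)(n_2+1)}$ at the nodes $\left(\frac{k-1}{m_1}+\frac{i}{m_1n_1},\frac{l-1}{m_2}+\frac{j}{n_2m_2}\right)$, and computes the resulting quadruple-index sum over pairs of nodes (split into an $x$-part and a $y$-part), stating the closed form without intermediate steps. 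You instead stay with the integral representation $\overline{\mathcal{I}}(f)=\int_0^1\int_0^1\overline{\mathcal{B}}(f;x,y)\,dx\,dy$ and reduce everything to the univariate moments $\overline{\mathcal{I}}(1)=1$, $\overline{\mathcal{I}}(u)=\frac12$ (reproduction of linear functions) and $\overline{\mathcal{I}}(u^2)=\frac13+\frac{1}{6n_1m_1^2}$, the last obtained from the exact remainder formula of item 2 in Section 5 applied to $e_2$ (whose second derivative is the constant $2$, so the pointwise $\xi_x$ causes no trouble) and the elementary integral $\int_a^b(x-a)(b-x)\,dx=\frac{(b-a)^3}{6}$; all of your numerical steps check out, including $\overline{\mathcal{I}}(x^2-x)=-\frac16+\frac{1}{6n_1m_1^2}$ and the final assembly. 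In effect your argument is the coordinatewise variance identity $L^tL^s\left((t-s)^2\right)=2\left(L(e_2)-L(e_1)^2\right)$, giving $\frac16+\frac{1}{3n_im_i^2}$ per coordinate, which is exactly what the paper's double node sum collapses to. What your route buys is transparency and easy verifiability (three one-dimensional moments instead of combinatorial bookkeeping over eight indices); what the paper's route buys is that it works directly from the cubature weights and nodes, making visible that $\overline{\mathcal{I}}$ is an equal-weight sampling functional, which is also what feeds its later discrete-oscillation result (Theorem \ref{th2}).
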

\begin{proof}We have
\begin{align*}&\overline{\mathcal{I}}\left(d_{2}^{2}(\cdot,\cdot)\right)=\displaystyle\sum_{k,k_1=1}^{m_1}\sum_{l,l_1=1}^{m_2}\sum_{i,i_1=0}^{n_1}\sum_{j,j_1=0}^{n_2}
\frac{1}{m_1^2m_2^2(n_1+1)^2(n_2+1)^2}\\
&\cdot\left[\left(\displaystyle\frac{k_1-1}{m_1}+\frac{i_1}{m_1n_1}-\frac{k-1}{m_1}-\frac{i}{m_1n_1}\right)^2+
\left(\frac{l_1-1}{m_2}+\frac{j_1}{n_2m_2}-\frac{l-1}{m_2}-\frac{j}{n_2m_2}\right)^2\right]\\
&=\displaystyle\frac{1}{m_1^2(n_1\!+\!1)^2}\sum_{k,k_1=1}^{m_1}\!\!\sum_{i,i_1=0}^{n_1}\!\!\left(\frac{k_1\!-\!k}{m_1}\!+\!\frac{i_1\!-\!i}{m_1n_1}\right)^2
\!+\!\displaystyle\frac{1}{m_2^2(n_2\!+\!1)^2}\sum_{l,l_1=1}^{m_2}\sum_{j,j_1=0}^{n_2}\left(\frac{l_1\!-\!l}{m_2}\!+\!\frac{j_1\!-\!j}{m_2n_2}\right)^2\\
&=\displaystyle\frac{1}{3}\left(1+\frac{1}{m_1^2n_1}+\frac{1}{m_2^2n_2}\right).
\end{align*}
Therefore, using Theorem \ref{t6.1} it follows
\begin{align*}\left|\overline{\mathcal{I}}(fg)-\overline{\mathcal{I}}(f)\overline{\mathcal{I}}(g)\right|&\leq
\displaystyle\frac{1}{4}\tilde{\omega}_{d_2}\left(f;2\sqrt{\frac{1}{3}\left(1+\frac{1}{n_1m_1^2}+\frac{1}{n_2m_2^2}\right)}\right)\\
&\cdot
\tilde{\omega}_{d_2}\left(g;2\sqrt{\frac{1}{3}\left(1+\frac{1}{n_1m_1^2}+\frac{1}{n_2m_2^2}\right)}\right).
\end{align*}
\end{proof}
In the following part of this section we will give a
Chebyshev-Gr\"{u}ss type\linebreak  inequality which involves oscillations of
function. This result is obtained\linebreak  using a general inequality
published in \cite{AnaMaria}.
 Let
$Y$ be an arbitrary set and $B(Y^2)$ the set of all real-valued,
bounded   functions on $Y^2$. Take $a_n, b_n\in \R,\ n\geq 0$, such
that $\displaystyle\sum^{\infty}_{n=0}{\left|a_n\right|}<\infty$,
$\displaystyle\sum^{\infty}_{n=0}{a_n}=1$ and
$\displaystyle\sum^{\infty}_{n=0}{\left|b_n\right|}<\infty$,
$\displaystyle\sum^{\infty}_{n=0}{b_n}=1$, respectively.
Furthermore, let $x_n\in Y, n\geq 0$ and $y_m\in Y,\ m\geq 0$ be
arbitrary mutually distinct points. For $f\in B(Y^2)$ set
$f_{n,m}:=f(x_n,y_m)$. Now consider the functional $L:B(Y^2)\to \R$,
$Lf=\displaystyle\sum^{\infty}_{n=0}\displaystyle\sum^{\infty}_{m=0}{a_nb_mf_{n,m}}$.
The functional $L$ is linear and reproduces constant functions.

\begin{thm}\label{th1.1}\cite{AnaMaria}
The Chebyshev-Gr\"{u}ss-type inequality for the above linear
 functional $L$ is given by:
    \begin{equation*}
        \left|L(fg)-L(f)\cdot L(g)\right| \leq \frac{1}{2}\cdot osc_L(f)\cdot osc_L(g)\cdot \sum^{\infty}_{n,m,i,j=0,\ (n,m)\neq (i,j)}{\left|a_nb_ma_ib_j\right|},
    \end{equation*}
where $f,g\in B(Y^2)$ and we define the oscillations to be:
    \begin{align*}
        osc_L(f)&:=\sup\{\left|f_{n,m}-f_{i,j}\right|: n,m,i,j\geq
        0\}.
    \end{align*}
\end{thm}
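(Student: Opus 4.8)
The plan is to reduce the whole statement to a discrete Korkine-type identity for the bilinear functional $T(f,g) := L(fg) - L(f)\,L(g)$, exactly as in the classical Chebyshev-Gr\"uss setting, the one new feature being that the ``weights'' $a_n b_m$ need not be nonnegative. Before any manipulation I would secure absolute convergence of all the series involved: writing $A := \sum_{n\ge 0}|a_n|<\infty$ and $B := \sum_{m\ge 0}|b_m|<\infty$, and using that $f,g\in B(Y^2)$ are bounded, every double or quadruple series below is dominated in modulus by $4\,A^2B^2\,\|f\|_{\infty}\|g\|_{\infty}$. This absolute convergence is precisely what licenses the reindexings and the Fubini-for-series that the argument needs; since the weights change sign, the identity below is only formal until convergence is in hand, so I would settle this at the outset.

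Next I would set $p_{n,m} := a_n b_m$ and note $\sum_{n,m} p_{n,m} = \big(\sum_n a_n\big)\big(\sum_m b_m\big) = 1$, so that $L(f) = \sum_{n,m} p_{n,m}\,f_{n,m}$ is a signed weighted mean of total mass one (this is just the reproduction of constants). The core step is then the identity
\[
T(f,g) = \frac{1}{2}\sum_{n,m}\sum_{i,j} p_{n,m}\,p_{i,j}\,(f_{n,m}-f_{i,j})\,(g_{n,m}-g_{i,j}),
\]
proved by expanding the product on the right into its four terms and collapsing each one with $\sum p_{n,m}=1$: the two ``diagonal'' products $f_{n,m}g_{n,m}$ and $f_{i,j}g_{i,j}$ each yield $L(fg)$ (because the summation over the complementary index contributes a factor $1$), while the two ``cross'' products each yield $L(f)L(g)$, so the right-hand side equals $\tfrac12\big(2L(fg)-2L(f)L(g)\big)=T(f,g)$. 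I would stress that this computation uses only $\sum p_{n,m}=1$ and never the sign of the weights, which is exactly why it transfers verbatim to the present signed setting.

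Finally I would estimate. Taking moduli in the identity and applying the triangle inequality gives $|T(f,g)| \le \tfrac12 \sum_{n,m}\sum_{i,j} |p_{n,m}|\,|p_{i,j}|\,|f_{n,m}-f_{i,j}|\,|g_{n,m}-g_{i,j}|$. The diagonal terms with $(n,m)=(i,j)$ vanish and may be dropped; on the remaining terms each factor $|f_{n,m}-f_{i,j}|$ is at most $osc_L(f)$ and each $|g_{n,m}-g_{i,j}|$ at most $osc_L(g)$ by the definition of the oscillations. Extracting these constants and using $|p_{n,m}|\,|p_{i,j}| = |a_n b_m a_i b_j|$ leaves
\[
|T(f,g)| \le \frac{1}{2}\,osc_L(f)\,osc_L(g)\!\!\sum_{\substack{n,m,i,j\ge 0\\ (n,m)\ne(i,j)}}\!\!|a_n b_m a_i b_j|,
\]
which is the asserted inequality. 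The main obstacle is not any individual algebraic step but the justification of the rearrangements under signed weights; once the absolute-convergence bound of the first paragraph is available, the removal of the diagonal and the extraction of the suprema are routine and the conclusion follows.
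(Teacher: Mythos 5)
Your proof is correct. Note that the paper itself gives no proof of this statement: Theorem \ref{th1.1} is quoted verbatim from the cited reference \cite{AnaMaria}, where the argument is precisely the one you give --- the discrete Korkine-type identity
$L(fg)-L(f)L(g)=\tfrac{1}{2}\sum_{n,m}\sum_{i,j}a_nb_ma_ib_j\,(f_{n,m}-f_{i,j})(g_{n,m}-g_{i,j})$,
justified by absolute convergence of the weight series, followed by dropping the vanishing diagonal and bounding the differences by the discrete oscillations. Your explicit attention to the signed weights (using only $\sum_{n,m}a_nb_m=1$, never positivity) is exactly the point that makes the general, non-positive case of the theorem go through, so there is nothing to add.
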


\begin{thm}\label{th2}\cite{AnaMaria}
In particular, if $a_n\geq 0,\ b_m\geq 0$, $n,m\geq 0$, then $L$ is
a positive linear functional and we have:
    \begin{equation*}
        \left|L(fg)-L(f)\cdot L(g)\right| \leq \frac{1}{2}\cdot \left(1-\sum^{\infty}_{n=0}{a^2_n}\cdot \sum^{\infty}_{m=0}{b^2_m}\right)\cdot osc_L(f)\cdot osc_L(g),
    \end{equation*}
for $f,g\in B(Y^2)$ and the oscillations given as above.
\end{thm}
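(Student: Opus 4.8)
The plan is to derive Theorem \ref{th2} directly from Theorem \ref{th1.1} by evaluating the quadruple sum $\sum_{(n,m)\neq(i,j)}|a_nb_ma_ib_j|$ explicitly under the extra hypothesis $a_n,b_m\geq 0$. First I would dispose of the positivity claim, which is immediate: since $Lf=\sum_{n,m}a_nb_mf_{n,m}$ with $a_nb_m\geq 0$, whenever $f\geq 0$ we have $Lf\geq 0$, and $L(1)=\left(\sum_n a_n\right)\left(\sum_m b_m\right)=1$, so $L$ is a positive, constant-reproducing linear functional. This is the sense in which $L$ is a convex combination of the point evaluations $f\mapsto f_{n,m}$. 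In particular Theorem \ref{th1.1} applies verbatim.

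The substantive step is the simplification of the coefficient sum. Because all the $a_n$ and $b_m$ are non-negative, the absolute values collapse, $|a_nb_ma_ib_j|=a_nb_ma_ib_j$, and the absolute convergence guaranteed by $\sum_n|a_n|<\infty$ and $\sum_m|b_m|<\infty$ permits any rearrangement of the series. I would then split off the diagonal $(n,m)=(i,j)$ from the unrestricted sum,
$$\sum_{(n,m)\neq(i,j)}a_nb_ma_ib_j=\sum_{n,m,i,j\geq 0}a_nb_ma_ib_j-\sum_{n,m\geq 0}a_n^2b_m^2.$$
The full sum factors as $\left(\sum_n a_n\right)\left(\sum_m b_m\right)\left(\sum_i a_i\right)\left(\sum_j b_j\right)=1$, using $\sum_n a_n=\sum_m b_m=1$, while the diagonal part (set $i=n$, $j=m$) factors as $\left(\sum_n a_n^2\right)\left(\sum_m b_m^2\right)$. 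Hence the restricted sum equals $1-\left(\sum_n a_n^2\right)\left(\sum_m b_m^2\right)$.

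Substituting this value into the bound of Theorem \ref{th1.1} yields
$$\left|L(fg)-L(f)L(g)\right|\leq\frac{1}{2}\left(1-\sum_{n=0}^{\infty}a_n^2\cdot\sum_{m=0}^{\infty}b_m^2\right)osc_L(f)\,osc_L(g),$$
which is exactly the assertion. I do not expect any genuine obstacle here; the only point needing a word of justification is the interchange of summation when isolating the diagonal, and that is fully licensed by the absolute convergence of the two coefficient sequences, so the argument is essentially a bookkeeping computation applied to the already-established general inequality.
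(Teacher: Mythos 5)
Your proposal is correct, and there is nothing to compare it against within the paper itself: Theorem \ref{th2} is stated here without proof, being imported from \cite{AnaMaria} as a corollary of Theorem \ref{th1.1}. Your derivation --- positivity and constant reproduction of $L$ from $a_nb_m\geq 0$ and $\sum_n a_n=\sum_m b_m=1$, then evaluating the coefficient sum by splitting off the diagonal, $\sum_{(n,m)\neq(i,j)}a_nb_ma_ib_j=\left(\sum_n a_n\right)^2\left(\sum_m b_m\right)^2-\sum_{n=0}^{\infty}a_n^2\cdot\sum_{m=0}^{\infty}b_m^2=1-\sum_{n=0}^{\infty}a_n^2\cdot\sum_{m=0}^{\infty}b_m^2$, with the rearrangement licensed by absolute convergence --- is exactly the intended specialization and is complete as written.
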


The following result gives us the non-multiplicative of the
functional ${\mathcal{I}}$ using discrete oscillations. This result
is better than (\ref{e6.1}) in the sense that the oscillations of
functions are relative only to certain points, while in (\ref{e6.1})
the oscillations, expressed in terms of $\tilde{\omega}$, are
relative to the whole interval $[0,1]$.

\begin{cor}\label{c6.1} If $f,g\in B([0,1]^2)$, then
$$\left|\overline{\mathcal{I}}(fg)-\overline{\mathcal{I}}(f)\overline{\mathcal{I}}(g)\right|\leq
\displaystyle\frac{1}{2}\left(1-\frac{1}{m_1m_2(n_1+1)(n_2+1)}\right)osc(f)osc(g).
$$
\end{cor}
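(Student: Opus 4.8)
The plan is to recognize $\overline{\mathcal{I}}$ as a discrete, tensor-product positive linear functional of exactly the type treated in Theorem \ref{th2}, and then to read off the constant directly from the nodes and weights found in the preceding section. Summing over all cells the single-cell integration formula derived there, with $A_{n_1,n_2,m_1,m_2}=\frac{1}{m_1m_2(n_1+1)(n_2+1)}$, gives, for $f\in B([0,1]^2)$,
$$\overline{\mathcal{I}}(f)=\sum_{k=1}^{m_1}\sum_{l=1}^{m_2}\sum_{i=0}^{n_1}\sum_{j=0}^{n_2}\frac{1}{m_1m_2(n_1+1)(n_2+1)}\,f\!\left(\frac{k-1}{m_1}+\frac{i}{m_1n_1},\frac{l-1}{m_2}+\frac{j}{n_2m_2}\right).$$
Indexing the $x$-nodes by $N=(k,i)$ and the $y$-nodes by $M=(l,j)$, this has the product form $\overline{\mathcal{I}}(f)=\sum_N\sum_M a_Nb_M\,f(x_N,y_M)$ with all $x$-weights equal to $a_N=\frac{1}{m_1(n_1+1)}$ (there being $m_1(n_1+1)$ of them) and all $y$-weights equal to $b_M=\frac{1}{m_2(n_2+1)}$. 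These are nonnegative and sum to $1$ in each variable, so $\overline{\mathcal{I}}$ is a positive linear functional reproducing constants and Theorem \ref{th2} is applicable with $L=\overline{\mathcal{I}}$ and $osc(f)=osc_{\overline{\mathcal{I}}}(f)$.

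First I would evaluate the two sums of squared weights, which are immediate because within each variable the weights are constant. Since each of the $m_1(n_1+1)$ numbers $a_N$ equals $\frac{1}{m_1(n_1+1)}$,
$$\sum_N a_N^2=m_1(n_1+1)\cdot\frac{1}{\bigl(m_1(n_1+1)\bigr)^2}=\frac{1}{m_1(n_1+1)},$$
and symmetrically $\sum_M b_M^2=\frac{1}{m_2(n_2+1)}$. Substituting into the bound of Theorem \ref{th2} and using
$$1-\Bigl(\sum_N a_N^2\Bigr)\Bigl(\sum_M b_M^2\Bigr)=1-\frac{1}{m_1m_2(n_1+1)(n_2+1)}$$
yields precisely the inequality claimed in the corollary.

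The one step that genuinely needs care—and the only real obstacle—is that the listed nodes are \emph{not} mutually distinct, so the distinctness hypothesis of Theorem \ref{th2} is not literally met: the node with $i=n_1$ in the $k$-th $x$-cell and the node with $i=0$ in the $(k+1)$-st coincide at $\frac{k}{m_1}$ (and likewise for $y$), so each interior grid line is listed twice. I would dispose of this by invoking the proof of Theorem \ref{th2} rather than its statement, namely the algebraic Gr\"uss identity
$$\overline{\mathcal{I}}(fg)-\overline{\mathcal{I}}(f)\overline{\mathcal{I}}(g)=\tfrac12\sum_{N,M}\sum_{I,J}a_Nb_Ma_Ib_J\,(f_{N,M}-f_{I,J})(g_{N,M}-g_{I,J}),$$
which holds verbatim for the repeated listing since it is a pure identity in the weights, while $osc_{\overline{\mathcal{I}}}(f)$ is unaffected by the repetition because coincident nodes contribute a vanishing difference. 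Bounding each $|f_{N,M}-f_{I,J}|$ and $|g_{N,M}-g_{I,J}|$ by the respective oscillation and using $\sum_{(N,M)\neq(I,J)}a_Nb_Ma_Ib_J=1-\sum_N a_N^2\sum_M b_M^2$ then delivers the stated estimate as a valid upper bound. I would also remark that merging the repeated nodes only \emph{increases} $\sum_N a_N^2\sum_M b_M^2$ and hence sharpens the bound, so the clean constant $\frac{1}{m_1m_2(n_1+1)(n_2+1)}$ recorded in the corollary is the honest, if slightly conservative, choice.
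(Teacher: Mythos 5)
Your proof is correct and follows exactly the route the paper intends: the paper states this corollary without proof as an immediate application of Theorem \ref{th2}, with all $x$-weights equal to $\frac{1}{m_1(n_1+1)}$ and all $y$-weights equal to $\frac{1}{m_2(n_2+1)}$, so that $\sum a_N^2\sum b_M^2=\frac{1}{m_1m_2(n_1+1)(n_2+1)}$, precisely your computation. In fact you are more careful than the paper: the coincidence of the nodes $\frac{k}{m_1}$ (listed once as $i=n_1$ in the $k$-th cell and once as $i=0$ in the $(k+1)$-st) does violate the mutual-distinctness hypothesis in the setup of Theorem \ref{th2}, a point the paper silently ignores, and your fallback to the underlying Gr\"uss identity---valid verbatim for repeated nodes, with the oscillation unchanged since coincident nodes give vanishing differences---legitimately closes that gap, together with the correct observation that merging repeated nodes would only sharpen the constant.
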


\bibliographystyle{amsplain}

\end{document}